\newcommand{\de}{\partial}
\newcommand{\db}{\overline{\partial}}
\newcommand{\ddbar}{\sqrt{-1} \partial \overline{\partial}}
\newcommand{\Ric}{\mathrm{Ric}}
\newcommand{\ov}[1]{\overline{#1}}
\newcommand{\mn}{\sqrt{-1}}
\newcommand{\tr}[2]{\textrm{tr}_{#1}{#2}}
\newcommand{\ti}[1]{\tilde{#1}}
\newcommand{\vp}{\varphi}
\newcommand{\ve}{\varepsilon}
\renewcommand{\leq}{\leqslant}
\renewcommand{\geq}{\geqslant}
\renewcommand{\le}{\leqslant}
\renewcommand{\ge}{\geqslant}
\begin{document}
\newtheorem{claim}{Claim}
\newtheorem{theorem}{Theorem}[section]
\newtheorem{conjecture}[theorem]{Conjecture}
\newtheorem{lemma}[theorem]{Lemma}
\newtheorem{corollary}[theorem]{Corollary}
\newtheorem{proposition}[theorem]{Proposition}
\newtheorem{question}[theorem]{question}
\newtheorem{defn}[theorem]{Definition}
\numberwithin{equation}{section}

\newenvironment{example}[1][Example]{\addtocounter{remark}{1} \begin{trivlist}
\item[\hskip
\labelsep {\bfseries #1  \thesection.\theremark}]}{\end{trivlist}}
\title[Hermitian metrics and Monge-Amp\`ere equations]
{Hermitian metrics, $(n-1, n-1)$ forms and Monge-Amp\`ere equations$^*$}

\author[V. Tosatti]{Valentino Tosatti}
\thanks{$^{*}$Research supported in part by NSF grants DMS-1236969, DMS-1308988 and DMS-1332196.  The first named-author is supported in part by a Sloan Research Fellowship.}
\address{Department of Mathematics, Northwestern University, 2033 Sheridan Road, Evanston, IL 60208}
\author[B. Weinkove]{Ben Weinkove}

\begin{abstract}  We show existence of unique smooth solutions to the Monge-Amp\`ere equation for $(n-1)$-plurisubharmonic functions on Hermitian manifolds, generalizing previous work of the authors.  As a consequence we obtain Calabi-Yau theorems  for Gauduchon and strongly Gauduchon metrics on a class of non-K\"ahler manifolds:  those satisfying the Jost-Yau condition known as Astheno-K\"ahler.

Gauduchon conjectured in 1984 that a Calabi-Yau theorem for Gauduchon metrics holds on \emph{all} compact complex manifolds.  We discuss another
Monge-Amp\`ere equation, recently introduced by  Popovici, and show that the full Gauduchon conjecture can be reduced to a second order estimate of
  Hou-Ma-Wu type.
\end{abstract}
\maketitle

\section{Introduction}

The complex Monge-Amp\`ere equation
\begin{equation} \label{yau}
(\omega + \ddbar u)^n = e^F \omega^n, \quad \omega+\ddbar u >0,
\end{equation}
on a compact K\"ahler manifold $(M, \omega)$ was solved by Yau in the 1970's \cite{Ya} and has played an ubiquitous  role in K\"ahler geometry ever since.   Here $F$ is a given smooth function on $M$, normalized so that $\int_M e^F \omega^n = \int_M \omega^n$.  Yau's Theorem, conjectured in the 1950's by Calabi, is that (\ref{yau}) has a unique solution $u$ with $\sup_M u=0$.

For a general Hermitian metric $\omega$, the complex Monge-Amp\`ere equation was solved  in full generality by the authors in \cite{TW2} (see also \cite{Ch, GL, TW1}).  In this case, there exists a unique pair $(u,b)$ with $u$ a smooth function satisfying $\sup_M u=0$ and $b$ a constant such that
\begin{equation} \label{cxma}
(\omega + \ddbar u)^n = e^{F+b} \omega^n, \quad \omega+\ddbar u >0.
\end{equation}
This equation has applications to the study of cohomology classes and notions of positivity
 on complex manifolds \cite{TW2a, Chi, Xi}.

Let $M$ be a compact complex manifold of dimension $n>2$.  We
 can consider metrics $\omega$ satisfying conditions which are weaker than K\"ahler.  There is a bijection from the space of positive definite $(1,1)$ forms to positive definite $(n-1,n-1)$ forms, given by
\begin{equation} \label{bijection}
\omega \mapsto \omega^{n-1}.
\end{equation}
A key point is that closedness conditions on $\omega^{n-1}$ impose fewer equations than the same conditions on $\omega$.
Indeed, Gauduchon \cite{Ga0} showed that every Hermitian metric is conformal to a metric $\omega$ satisfying
$$\partial \ov{\partial} (\omega^{n-1}) =0,$$
a condition now known as \emph{Gauduchon}.  A stronger condition, introduced recently by Popovici \cite{Po},
$$\ov{\partial} (\omega^{n-1}) \  \textrm{is $\partial$-exact},$$
 is known as \emph{strongly Gauduchon}.

There are natural Monge-Amp\`ere equations associated to these conditions, obtained by replacing $(1,1)$ forms by $(n-1,n-1)$ forms.
Our first result is about an equation which we call the \emph{Monge-Amp\`ere equation for $(n-1)$-plurisubharmonic functions}.

\begin{theorem} \label{mainthm} Let $M$ be a compact complex manifold with two Hermitian metrics $\omega_0$ and $\omega$.  Let $F$ be a smooth function.  Then there exists a unique pair $(u,b)$ with $u$ a smooth function on $M$ and $b$ a constant such that
\begin{equation} \label{ma1}
\det \left( \omega_0^{n-1} + \ddbar u \wedge \omega^{n-2} \right) = e^{F+b} \det \left( \omega^{n-1} \right),
\end{equation}
with $$\omega_0^{n-1} + \ddbar u \wedge \omega^{n-2} > 0, \quad \sup_M u=0.$$
\end{theorem}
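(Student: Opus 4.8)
The plan is to run the continuity method, exploiting that equation (\ref{ma1}) depends on $u$ only through $\ddbar u$: adding a constant to $u$ changes nothing, the normalization $\sup_M u=0$ merely selects a representative, and the free constant $b$ absorbs any integral mismatch, so no normalization of $F$ is required. Put $\hat F_0=\log\bigl(\det(\omega_0^{n-1})/\det(\omega^{n-1})\bigr)$ and $F_t=(1-t)\hat F_0+tF$, let $(\star_t)$ be (\ref{ma1}) with $F$ replaced by $F_t$, and set $S=\{t\in[0,1]:(\star_t)\text{ admits a solution with }\omega_0^{n-1}+\ddbar u_t\wedge\omega^{n-2}>0\}$; then $(u,b)=(0,0)$ solves $(\star_0)$, so $0\in S$, $(\star_1)$ is (\ref{ma1}), and it suffices to prove $S$ open and closed. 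Two reformulations are useful. First, by the bijection (\ref{bijection}) applied to positive forms, whenever $\omega_0^{n-1}+\ddbar u\wedge\omega^{n-2}>0$ it equals $\ti\omega_u^{n-1}$ for a unique positive $(1,1)$-form $\ti\omega_u$, and since $\det(\eta^{n-1})=c_n(\det\eta)^{n-1}$ for any positive $(1,1)$-form $\eta$, equation (\ref{ma1}) is equivalent to the complex Monge-Amp\`ere equation $\ti\omega_u^{n}=e^{(F+b)/(n-1)}\omega^n$. Second, since $\chi\mapsto\chi\wedge\omega^{n-2}$ is a linear isomorphism from $(1,1)$-forms onto $(n-1,n-1)$-forms, there is a unique $(1,1)$-form $\eta_0$ with $\eta_0\wedge\omega^{n-2}=\omega_0^{n-1}$, and in a frame diagonalizing $\omega$ and $\eta_0+\ddbar u$ simultaneously, (\ref{ma1}) becomes $f\bigl(\la[\eta_0+\ddbar u]\bigr)=c_n e^{F+b}$ with $f(\la)=\prod_j\bigl(\sigma_1(\la)-\la_j\bigr)$, $\sigma_1(\la)=\sum_k\la_k$, and $\la[\cdot]$ the eigenvalues taken with respect to $\omega$; thus $(\star_t)$ is a fully nonlinear second order equation of Hessian type with structure function $f$ on the open convex cone $\Gamma=\{\la:\sigma_1(\la)>\la_j\ \forall j\}\supseteq\Gamma_n$, on which $\log f$ is concave, the constraint being exactly $\la[\eta_0+\ddbar u]\in\Gamma$. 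Since $\omega_0>0$ forces $\la[\eta_0]\in\Gamma$, the function $u\equiv0$ is a subsolution, which anchors all estimates. Openness of $S$ is then the implicit function theorem: the linearization of $u\mapsto\log\det\bigl((\eta_0+\ddbar u)\wedge\omega^{n-2}\bigr)$ at a solution is a linear elliptic operator $L$ with no zeroth order term, so by elliptic Fredholm theory and the strong maximum principle ($\ker L$ consists of constants) the map $(v,c)\mapsto Lv-c$ is an isomorphism from $\{v\in C^{2,\alpha}:\int_M v\,\omega^n=0\}\times\mathbb{R}$ onto $C^{0,\alpha}$, hence $(\star_{t'})$ is solvable for $t'$ near $t$, and one restores $\sup_M u=0$ by adding a constant.

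The substance is closedness, requiring a priori estimates for a solution $(u,b)$ of $(\star_t)$ uniform in $t$ (note $F_t$ is bounded in every $C^k$ independently of $t$). The chain runs: (i) a zeroth order bound $\|u\|_{C^0}\le C$ together with $|b|\le C$, obtained from an $L^\infty$ estimate of pluripotential or Alexandrov-Bakelman-Pucci type — either via the complex Monge-Amp\`ere reformulation $\ti\omega_u^{n}=e^{(F+b)/(n-1)}\omega^n$ and the Hermitian generalizations of Ko\l odziej's estimate, or via the subsolution $u\equiv0$ in the style of B\l ocki and Sz\'ekelyhidi; (ii) a second order estimate $\sup_M|\ddbar u|\le C\,(1+\sup_M|\nabla u|^2)$ of Hou-Ma-Wu type, via the maximum principle applied to a quantity of the form $\log\la_{\max}(\ddbar u)+\vp(|\nabla u|^2)+\psi(u)$ and the concavity of $\log f$; (iii) a gradient estimate, which I would prove by a blow-up and rescaling argument — were $\sup_M|\nabla u|$ unbounded, the rescalings would converge to an entire solution of the model equation on $\mathbb{C}^n$, contradicting a Liouville theorem in the spirit of Dinew-Ko\l odziej and Tosatti-Weinkove — so that (ii) and (iii) together give $\|u\|_{C^2}\le C$; and (iv) the Evans-Krylov theorem (legitimate because $\log f$ is concave) yielding $\|u\|_{C^{2,\alpha}}\le C$, followed by Schauder bootstrapping for uniform $C^\infty$ bounds; hence $S$ is closed. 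I expect the a priori estimates (i)-(iii) to be the main obstacle: all of them must absorb the torsion of $\omega_0$ and $\omega$ (not assumed K\"ahler), following Tosatti-Weinkove and Gill, and the second order estimate (ii) in the presence of this torsion is the crux, with the gradient estimate (iii) the next most delicate point.

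Finally, uniqueness. If $(u,b)$ and $(u',b')$ both solve (\ref{ma1}), then at an interior maximum of $u-u'$ one has $\ddbar(u-u')\le0$, hence $(\eta_0+\ddbar u)\wedge\omega^{n-2}\le(\eta_0+\ddbar u')\wedge\omega^{n-2}$ there, and monotonicity of $\det$ on positive Hermitian forms forces $b\le b'$; by symmetry $b=b'$. Then, writing the difference of the two equations as the integral of the linearization along the segment from $\eta_0+\ddbar u$ to $\eta_0+\ddbar u'$ (which stays in the positive cone, this cone being convex), $u-u'$ satisfies $a^{i\bar j}\partial_i\partial_{\bar j}(u-u')=0$ with $(a^{i\bar j})>0$ and no zeroth order term; by the strong maximum principle $u-u'$ is constant, and $\sup_M u=\sup_M u'=0$ gives $u=u'$.
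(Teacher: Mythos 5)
Your outline correctly identifies the overall structure — reduction to a fully nonlinear equation for $u$ via the Hodge star (equivalently via $\chi\mapsto\chi\wedge\omega^{n-2}$), continuity method anchored at $t=0$ with $u\equiv 0$, openness by the implicit function theorem, $L^\infty$ / $C^2$ / gradient / Evans--Krylov chain, uniqueness by comparing at the max of $u-u'$. This is indeed the skeleton of the paper's argument, and you are right to identify the second order estimate as the crux. But the proposal glosses over precisely the two steps where the Hermitian (non-K\"ahler) torsion actually enters, and in a way that leaves genuine gaps.

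For the second order estimate, you propose applying the maximum principle to the Hou--Ma--Wu quantity $\log\lambda_{\max}+\varphi(|\nabla u|^2)+\psi(u)$ and invoking concavity of $\log f$. That quantity is exactly what is used in the K\"ahler case (and in the paper's predecessor \cite{TW4}); the whole point of the present theorem is that it does \emph{not} suffice when $\omega$ is merely Hermitian. The reason is that commuting covariant derivatives in the differentiated equation produces third-order terms of the schematic form $T * D^3u$ (torsion of $\omega$ times a third derivative of $u$), and the positive term $\sum_i \Theta^{i\bar i}|\eta_{1\bar 1 i}|^2/(\eta_{1\bar 1})^2$ in $L(Q)$, which was available to absorb such terms in the K\"ahler case, is fully consumed there by the first-order-condition cancellations. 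The paper's key new device is to augment the test quantity with a small multiple $c\log\bigl(g^{p\bar q}\eta_{i\bar q}\eta_{p\bar j}\xi^i\ov{\xi^j}\bigr)$, which upon differentiation contributes the new positive terms $\frac{c}{2}\sum_{i,p\ne 1}\Theta^{i\bar i}|\eta_{p\bar 1 i}|^2/(\eta_{1\bar 1})^2 + \frac{c}{2}\sum_{i,p\ne 1}\Theta^{i\bar i}|\eta_{1\bar p i}|^2/(\eta_{1\bar 1})^2$; these absorb most of the $T*D^3u$ terms, and the remaining two torsion contributions (the $\ov{T^1_{1i}}\eta_{1\bar 1 i}$ terms arising from lines (2) and (5) in the paper's notation) cancel each other exactly. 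Without introducing this extra log and identifying the cancellation, the argument does not close — ``following Tosatti--Weinkove'' would only reproduce the K\"ahler proof, which is precisely what fails here. A concrete proof would need to either supply this device or an alternative mechanism for the torsion third-derivative terms.

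For the $L^\infty$ estimate, the reformulated equation $\tilde\omega_u^n=e^{(F+b)/(n-1)}\omega^n$ is not a complex Monge--Amp\`ere equation in $u$ (the form $\tilde\omega_u$ depends on $u$ through the full tensor $\frac{1}{n-1}\bigl((\Delta u)g - u_{i\bar j}\bigr)$, not through $\omega+\ddbar u$), so the Ko{\l}odziej-type and ABP estimates you cite do not apply out of the box. The paper instead derives a Cherrier-type inequality $\int_M |\partial e^{-pu/2}|_g^2\,\omega^n\le Cp\int_M e^{-pu}\omega^n$ from the pointwise algebraic inequality $\ddbar u\wedge(2\omega_0^{n-1}+\ddbar u\wedge\omega^{n-2})\le C\omega^n$ (Lemma 2.2), paying careful attention to the torsion terms $\ddbar\omega^{n-2}$ and $\ddbar\omega_0^{n-1}$ that appear upon integrating by parts, and then closes via Moser iteration (or a Green's function argument for a conformally Gauduchon metric). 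A subsolution-based $L^\infty$ estimate along the lines you suggest would work in Sz\'ekelyhidi's general framework, but that framework postdates the paper and itself requires non-trivial verification for this $f$ and cone; as written your sketch does not establish (i). The gradient estimate, Evans--Krylov step, openness, and uniqueness argument, by contrast, match the paper and are fine.
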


This generalizes a recent result of the authors \cite{TW4}, where it was proved for $\omega$ K\"ahler, a conjecture of Fu-Xiao \cite{FX} (see also \cite{FWW1, Po2}).  The equation (\ref{ma1}) for $\omega$ K\"ahler was first introduced by Fu-Wang-Wu \cite{FWW1, FWW2} who proved uniqueness and a number of other properties, including  existence in the case when $\omega$ satisfies an assumption on curvature.  The authors learned about equations of the type (\ref{ma1}) from J.-P. Demailly in relation to questions about strongly Gauduchon metrics.

 Note that a function $u$ satisfying
$$\omega_0^{n-1} + \ddbar u \wedge \omega^{n-2} > 0$$
is called \emph{$(n-1)$-plurisubharmonic} with respect to $\omega_0^{n-1}$ and $\omega$.  The reason for this terminology is as follows.  If $\omega$ is the Euclidean metric on $\mathbb{C}^n$ then the condition
$$\ddbar u \wedge \omega^{n-2} \ge 0,$$
is the statement that $u$ is $(n-1)$-plurisubharmonic in the sense of Harvey-Lawson \cite{HL}.  It is equivalent to the assertion that $u$ is subharmonic when restricted to every complex $(n-1)$-plane in $\mathbb{C}^n$.  Thus equation (\ref{ma1}) can be regarded as the Monge-Amp\`ere equation for $(n-1)$-plurisubharmonic functions on Hermitian manifolds.

We now describe briefly a technical innovation which we use to prove Theorem \ref{mainthm}.    A key difficulty in the case of Hermitian metrics arises in the second order estimate of $u$ (Theorem \ref{theoremC2} below).  There  are new
terms of the form $T * D^3u$, where $T$ is the torsion of $\omega$ and $D^3u$ an expression involving three derivatives of $u$.  To control these terms, the idea is to add a small multiple of the quantity $g^{p\ov{q}} \eta_{i\ov{q}} \eta_{p\ov{j}}$, the ``square of the tensor $\eta$ with respect to the metric $g$'', where $\eta_{i\ov{j}}$ is the same tensor as defined in \cite{TW4}.  By doing so we obtain positive terms which can bound most of the $T * D^3u$ terms.   A fortunate cancellation of two remaining torsion terms allows the argument to work (cf. (\ref{mystery1}) and (\ref{Line14}) below).

We now explain how equation (\ref{ma1}) is related to Gauduchon and strongly Gauduchon metrics.  In fact,
Theorem \ref{mainthm} gives Calabi-Yau theorems for these metrics for a certain class of non-K\"ahler manifolds.   Suppose that $\omega$ satisfies the condition
\begin{equation} \label{ak}
\partial \ov{\partial} (\omega^{n-2})=0.
\end{equation}
Then
 if $\omega_0$ is Gauduchon  and $u$ a smooth function, a metric $\omega_u$ whose $(n-1)$th power is given by
$$\omega_u^{n-1} = \omega_0^{n-1} + \ddbar u \wedge \omega^{n-2}>0$$
is also Gauduchon.  The same assertion holds if we replace ``Gauduchon'' by ``strongly Gauduchon''.

A metric $\omega$ satisfying (\ref{ak}) is called \emph{Astheno-K\"ahler}. Astheno-K\"ahler metrics were introduced by Jost-Yau in \cite{JY}. See also \cite{FT, LYZ, Ma, MT} for more on these metrics, and for examples of Astheno-K\"ahler manifolds without K\"ahler metrics.

We obtain:

\begin{corollary}\label{cor1}
Let $M$ be a compact complex manifold equipped with an Astheno-K\"ahler metric $\omega$.  Let $\omega_0$ be a Gauduchon (resp. strongly Gauduchon) metric on $M$ and $F'$ a smooth function on $M$.  Then there exists a unique constant $b'$ and a unique Gauduchon (resp. strongly Gauduchon) metric, which we write as $\omega_u$, with
$$\omega_u^{n-1} = \omega_0^{n-1} + \ddbar u \wedge \omega^{n-2}$$
for some smooth function $u$, solving the Calabi-Yau equation
\begin{equation} \label{mma1}
\omega_u^n = e^{F'+b'} \omega^n.
\end{equation}
\end{corollary}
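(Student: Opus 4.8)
The plan is to reduce the Calabi-Yau equation (\ref{mma1}) to the Monge-Amp\`ere equation (\ref{ma1}) and then quote Theorem \ref{mainthm}. The link between the two is the bijection (\ref{bijection}) together with the pointwise linear algebra identity
\begin{equation*}
\frac{\det(\alpha^{n-1})}{\det(\beta^{n-1})} = \left( \frac{\alpha^n}{\beta^n} \right)^{n-1},
\end{equation*}
valid for any two positive $(1,1)$ forms $\alpha,\beta$ on an $n$-dimensional complex vector space. Indeed, in a local frame the Hermitian matrix representing $\alpha^{n-1}$ is, up to a fixed dimensional constant, the cofactor matrix of the one representing $\alpha$, so that $\det(\alpha^{n-1}) = (\det\alpha)^{n-1}$ in the normalization in which $\det\beta = 1$, while in that same normalization $\alpha^n/\beta^n = \det\alpha$.

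First I would apply Theorem \ref{mainthm} to the metrics $\omega_0,\omega$ and the function $F := (n-1)F'$. This produces a unique pair $(u,b)$ with $\sup_M u=0$, with $\omega_0^{n-1} + \ddbar u \wedge \omega^{n-2} > 0$, and with
\begin{equation*}
\det\left( \omega_0^{n-1} + \ddbar u \wedge \omega^{n-2} \right) = e^{(n-1)F' + b}\,\det\left( \omega^{n-1}\right).
\end{equation*}
Since $\omega_0^{n-1} + \ddbar u \wedge \omega^{n-2}>0$, the bijection (\ref{bijection}) gives a unique positive $(1,1)$ form $\omega_u$ with $\omega_u^{n-1} = \omega_0^{n-1} + \ddbar u \wedge \omega^{n-2}$. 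Setting $b' := b/(n-1)$ and applying the identity above with $\alpha=\omega_u$, $\beta=\omega$, the displayed equation becomes $(\omega_u^n/\omega^n)^{n-1} = e^{(n-1)(F'+b')}$; taking positive $(n-1)$th roots gives $\omega_u^n = e^{F'+b'}\omega^n$, which is (\ref{mma1}).

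Next I would verify that $\omega_u$ is Gauduchon (resp.\ strongly Gauduchon); this is exactly the assertion recorded in the discussion preceding Corollary \ref{cor1}, and it is here that the Astheno-K\"ahler hypothesis (\ref{ak}) is used. Writing $\chi := \ddbar u$, which is $d$-closed, one computes $\de\db(\omega_u^{n-1}) = \de\db(\omega_0^{n-1}) + \chi\wedge\de\db(\omega^{n-2})$; the first term vanishes since $\omega_0$ is Gauduchon and the second by (\ref{ak}), so $\omega_u$ is Gauduchon. If in addition $\db(\omega_0^{n-1}) = \de\gamma$, then (\ref{ak}) also gives $\chi\wedge\db(\omega^{n-2}) = \de\big(\mn\,\db u\wedge\db(\omega^{n-2})\big)$, whence $\db(\omega_u^{n-1}) = \de\big(\gamma + \mn\,\db u\wedge\db(\omega^{n-2})\big)$ is $\de$-exact and $\omega_u$ is strongly Gauduchon.

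Finally, for uniqueness, suppose $(\omega_{u_1},b_1')$ and $(\omega_{u_2},b_2')$ both solve (\ref{mma1}) with $\omega_{u_i}^{n-1} = \omega_0^{n-1} + \ddbar u_i\wedge\omega^{n-2}$; subtracting constants from the $u_i$ (which changes neither $\omega_{u_i}$ nor the equation) we may assume $\sup_M u_i=0$. Reading the linear algebra identity backwards shows that $\big(u_i,(n-1)b_i'\big)$ solves (\ref{ma1}) with $F=(n-1)F'$, so the uniqueness part of Theorem \ref{mainthm} forces $u_1=u_2$ and $b_1'=b_2'$, hence $\omega_{u_1}=\omega_{u_2}$ by injectivity of (\ref{bijection}). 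I do not expect a genuine obstacle in this deduction: all of the analytic content is already contained in Theorem \ref{mainthm}, and the only points that require care are keeping track of the exponent $n-1$ in the linear algebra identity and checking that (\ref{ak}) is precisely the condition that propagates the (strongly) Gauduchon property from $\omega_0$ to $\omega_u$.
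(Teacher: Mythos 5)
Your proposal is correct and follows essentially the same route as the paper: reduce (\ref{mma1}) to (\ref{ma1}) via the relation $F=(n-1)F'$, $b=(n-1)b'$ and the identity $\det(\omega_u^{n-1})/\det(\omega^{n-1})=(\omega_u^n/\omega^n)^{n-1}$, verify that the Astheno-K\"ahler condition $\de\db(\omega^{n-2})=0$ propagates the Gauduchon (resp.\ strongly Gauduchon) property from $\omega_0$ to $\omega_u$, and then invoke the existence and uniqueness from Theorem \ref{mainthm}. You spell out a few details the paper leaves implicit (the exponent bookkeeping and the explicit potential $\gamma+\mn\,\db u\wedge\db(\omega^{n-2})$ in the strongly Gauduchon case), but the logical structure is identical.
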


This result was proved by the authors in \cite{TW4} under the stronger assumption that $\omega$ is K\"ahler.  In fact, when $\omega$ is K\"ahler
then the result of \cite{TW4} also gives a Calabi-Yau Theorem for \emph{balanced} metrics  \cite{Mi} (see also \cite{FX, FY, LY, St, To}).

Gauduchon made the following conjecture in 1984 \cite[IV.5]{Ga}:

\begin{conjecture} \label{conjG}
Let $M$ be a compact complex manifold.  Let $\psi$ be a closed real $(1,1)$ form on $M$ with $[\psi]=c_1^{\mathrm{BC}}(M)$ in $H^{1,1}_{\mathrm{BC}}(M,\mathbb{R})$. Then there exists a Gauduchon metric $\ti{\omega}$ on $M$ with
\begin{equation} \label{gCY}
\Ric(\ti{\omega})=\psi.
\end{equation}
\end{conjecture}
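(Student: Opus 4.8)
The plan is to translate (\ref{gCY}) into a scalar complex Monge-Amp\`ere equation of the type studied above and solve it by the continuity method, the a priori estimates being the heart of the matter. First, I reduce to a Calabi--Yau equation. By Gauduchon's theorem \cite{Ga0} fix a Gauduchon metric $\omega_0$ on $M$. Since $\psi$ and $\Ric(\omega_0)=-\ddbar\log\det g_0$ both represent $c_1^{\mathrm{BC}}(M)$, we may write $\psi=\Ric(\omega_0)+\ddbar f$ for a real $f\in C^\infty(M)$, so for a Gauduchon metric $\ti{\omega}$ the condition $\Ric(\ti{\omega})=\psi$ is equivalent to $\ddbar\big(\log(\det\ti{g}/\det g_0)+f\big)=0$. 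A standard integration by parts of $\mn\,\de h\wedge\db h\wedge\ti{\omega}^{n-1}\ge 0$ using $\ddbar(\ti{\omega}^{n-1})=0$ shows that a real function $h$ with $\ddbar h=0$ is constant, so (\ref{gCY}) is equivalent to finding a Gauduchon metric $\ti{\omega}$ and a constant $b$ with $\ti{\omega}^n=e^{f+b}\omega_0^n$, the constant $b$ being part of the unknown as in (\ref{cxma}) and (\ref{ma1}). If $M$ happens to carry an Astheno-K\"ahler metric $\omega$, this equation is solved at once by Corollary \ref{cor1} (write $\omega^n=e^g\omega_0^n$ and apply it with $F'=f-g$), so Conjecture \ref{conjG} holds in that case; the point is the general one.

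For general $M$ I pass, via the bijection (\ref{bijection}) and following Popovici \cite{Po}, to the $(n-1,n-1)$-form picture, writing $\ti{\omega}^{n-1}=\omega_0^{n-1}+\Theta(u)$, where $\Theta(u)$ is a real $(n-1,n-1)$ form built from a single scalar function $u$ and chosen so that $\omega_0^{n-1}+\Theta(u)$ stays in the fixed Aeppli class $[\omega_0^{n-1}]_A$ for every $u$ — for instance $\Theta(u)=\ddbar(u\,\omega_0^{n-2})$. Then $\ddbar(\ti{\omega}^{n-1})=0$ automatically, so whenever $\omega_0^{n-1}+\Theta(u)>0$ the associated metric $\ti{\omega}$ is Gauduchon. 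Taking determinants turns $\ti{\omega}^n=e^{f+b}\omega_0^n$ into a scalar complex Monge-Amp\`ere equation for the pair $(u,b)$, namely Popovici's equation. Structurally it is equation (\ref{ma1}) with $\omega=\omega_0$ \emph{plus} extra first- and zeroth-order terms involving the torsion of $\omega_0$; these are unavoidable, since for a non-Astheno-K\"ahler $\omega_0$ the torsion-free ansatz $\omega_0^{n-1}+\ddbar u\wedge\omega_0^{n-2}$ of Corollary \ref{cor1} is no longer $\ddbar$-closed.

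To solve Popovici's equation I deform from $u\equiv 0$ along the continuity method. The $C^0$ bound $\|u\|_{C^0}\le C$ should follow from the Alexandrov--Bakelman--Pucci and Moser iteration arguments of \cite{TW2, TW4} adapted to this operator; the gradient bound then follows from the $C^0$ and $C^2$ bounds by a blow-up argument; Evans--Krylov upgrades $C^2$ to $C^{2,\alpha}$ (the equation is concave in $\ddbar u$), and Schauder estimates bootstrap to $C^\infty$. Openness comes from the implicit function theorem, after checking invertibility of the linearized operator together with the scalar $b$, and closedness from the estimates; uniqueness of $(u,b)$ with $\sup_M u=0$ follows from a maximum principle argument as in \cite{FWW1, TW4}.

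The hard part — and the reason Conjecture \ref{conjG} is still open — is the second-order estimate: a Hou--Ma--Wu type bound $\sup_M|\ddbar u|\le C\big(1+\sup_M|\nabla u|^2\big)$, strong enough that the gradient term can afterwards be absorbed (e.g. by the blow-up method). As in the proof of Theorem \ref{theoremC2}, applying the maximum principle to the largest eigenvalue of the complex Hessian of $u$ produces uncontrolled third-order terms $T * D^3 u$ coming from the torsion $T$ of $\omega_0$; adding a small multiple of $g^{p\ov q}\eta_{i\ov q}\eta_{p\ov j}$ controls most of them, and in the setting of Theorem \ref{mainthm} a fortunate cancellation (cf. (\ref{mystery1}) and (\ref{Line14})) disposes of the remainder. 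For Popovici's equation the extra torsion terms carried by $\Theta(u)$, together with the failure of the Astheno-K\"ahler identity $\ddb(\omega_0^{n-2})=0$, obstruct this cancellation, and it is not known whether the estimate survives in full generality. Thus the most one can currently prove is the conditional statement advertised in the abstract: Conjecture \ref{conjG} holds once a Hou--Ma--Wu type second-order estimate is established for Popovici's equation.
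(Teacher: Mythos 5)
This statement is Conjecture \ref{conjG}, which the paper does not prove: it remains open, and your closing sentence correctly says so. What you have written is not a proof but an accurate account of the paper's reduction strategy, and it tracks the paper closely. Your first paragraph --- reducing $\Ric(\ti{\omega})=\psi$ to the Calabi--Yau equation $\ti{\omega}^n=e^{f+b}\omega_0^n$ via $\ddbar$ of a scalar, with the integration-by-parts observation that $\ddbar h=0$ forces $h$ constant when $\ti{\omega}$ is Gauduchon --- is precisely the reasoning in the proof of Corollary \ref{cor2} and in the paragraph showing Conjecture \ref{conjtw} implies Conjecture \ref{conjG}; the Astheno-K\"ahler special case you single out is exactly Corollary \ref{cor2}; and the identification of the missing ingredient as a Hou--Ma--Wu second-order estimate for the $(n-1,n-1)$-form equation is Theorem \ref{thmreduce}.

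There is, however, one genuine discrepancy in your setup: the ansatz $\Theta(u)=\ddbar(u\,\omega_0^{n-2})$ is not Popovici's perturbation. Expanding, $\ddbar(u\,\omega_0^{n-2}) = \ddbar u\wedge\omega_0^{n-2} + 2\,\mathrm{Re}\!\left(\mn\de u\wedge\db(\omega_0^{n-2})\right) + u\,\ddbar(\omega_0^{n-2})$, whereas Popovici's $\beta_u=\ddbar u\wedge\omega^{n-2}+\mathrm{Re}\!\left(\mn\de u\wedge\db(\omega^{n-2})\right)$ is the $(n-1,n-1)$-component of the $d$-exact form $d(d^c u\wedge\omega^{n-2})$ and, crucially, involves only first and second derivatives of $u$. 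The extra zeroth-order term $u\,\ddbar(\omega_0^{n-2})$ in your ansatz destroys translation-invariance in $u$ and breaks the two maximum-principle steps the paper relies on: the bound $|b|\le\sup_M|F|+C$, obtained by evaluating the equation at extrema of $u$ where the perturbation vanishes, and the uniqueness argument in Section \ref{sectionreduce}, which uses exactly that $w\mapsto *E(w)$ is linear in $w$ and vanishes at extrema of $w$. Neither property holds for $\ddbar(u\,\omega_0^{n-2})$. To match the conditional result the paper actually proves, you should instead take $\Theta(u)=\beta_u$ (with $\omega_0$ Gauduchon this still keeps $\omega_0^{n-1}+\Theta(u)$ $\de\db$-closed), which reproduces equation (\ref{manew}), and then invoke Theorems \ref{Linftythm} and \ref{thmreduce}.
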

 Here $\textrm{Ric}(\tilde{\omega})$ is the Chern-Ricci curvature, given locally by
 $$\Ric(\ti{\omega}) = - \ddbar \log \tilde{\omega}^n,$$
 and $H^{1,1}_{\mathrm{BC}}(M,\mathbb{R})$ is the Bott-Chern cohomology group of $d$-closed real $(1,1)$ forms modulo $\de\db$-exact ones.
 Gauduchon's conjecture is a natural extension of the celebrated Calabi conjecture \cite{Ya} to compact complex manifolds.  In complex dimension 2, Gauduchon's conjecture follows from the result of Cherrier \cite{Ch} (see  \cite{TW1} for an alternative proof) since in that case it follows from the solution of the complex Monge-Amp\`ere equation (\ref{cxma}).

As a consequence of Corollary \ref{cor1}, we can prove Gauduchon's conjecture if $M$ admits an Astheno-K\"ahler metric.

\begin{corollary} \label{cor2}
Let $M$ be a compact complex manifold equipped with an Astheno-K\"ahler metric $\omega$. Then Conjecture \ref{conjG} holds on $M$.
\end{corollary}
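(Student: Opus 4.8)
The plan is to obtain Conjecture~\ref{conjG} on $M$ as an immediate consequence of Corollary~\ref{cor1}, so essentially no new analysis is required. First I would recall that the first Bott-Chern class $c_1^{\mathrm{BC}}(M)$ is by definition represented by the Chern-Ricci form $\Ric(h)=-\ddbar\log h^n$ of an arbitrary Hermitian metric $h$: two such forms differ by $\ddbar$ of a (globally defined smooth) function, hence define the same class in $H^{1,1}_{\mathrm{BC}}(M,\mathbb{R})$. In particular $[\Ric(\omega)]=c_1^{\mathrm{BC}}(M)$ for our Astheno-K\"ahler metric $\omega$. Since the given closed real $(1,1)$ form $\psi$ satisfies $[\psi]=c_1^{\mathrm{BC}}(M)$, unwinding the definition of Bott-Chern cohomology produces a smooth function $f$ with $\psi-\Ric(\omega)=\ddbar f$; because $\psi$ and $\Ric(\omega)$ are real, $f$ may be taken real-valued (replace $f$ by its real part and use that $\ddbar$ of a real function is real, so $\ddbar$ of the imaginary part vanishes).

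Next I would fix any Gauduchon metric $\omega_0$ on $M$; such a metric exists by Gauduchon's theorem \cite{Ga0} (if $\omega$ itself happens to be Gauduchon one may take $\omega_0=\omega$, but this is not needed). Now apply Corollary~\ref{cor1} to the data consisting of the Astheno-K\"ahler metric $\omega$, the Gauduchon metric $\omega_0$, and the function $F'=-f$. This yields a constant $b'$ and a Gauduchon metric, which I will call $\ti{\omega}:=\omega_u$, of the form $\omega_u^{n-1}=\omega_0^{n-1}+\ddbar u\wedge\omega^{n-2}$ for some smooth $u$, solving the Calabi-Yau equation
$$\ti{\omega}^n = e^{-f+b'}\,\omega^n.$$

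Finally I would verify (\ref{gCY}) by taking $-\ddbar\log(\cdot)$ of both sides of the displayed equation and using that $\ddbar$ annihilates constants:
$$\Ric(\ti{\omega}) = -\ddbar\log\ti{\omega}^n = -\ddbar\bigl(-f+b'+\log\omega^n\bigr) = \ddbar f - \ddbar\log\omega^n = \ddbar f + \Ric(\omega) = \psi.$$
Thus $\ti{\omega}$ is a Gauduchon metric with $\Ric(\ti{\omega})=\psi$, which is exactly the assertion of Conjecture~\ref{conjG} on $M$. (Conjecture~\ref{conjG} asks only for existence, so no uniqueness claim is needed, although the metric produced by Corollary~\ref{cor1} is in fact unique within its construction.)

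I do not expect any genuine obstacle in this deduction: the entire analytic difficulty --- in particular the a priori estimates for the equation (\ref{ma1}), including the delicate second order estimate and its torsion terms $T * D^3u$ --- is already contained in Theorem~\ref{mainthm}, and hence in Corollary~\ref{cor1}. The only points that require care are the reality of the Bott-Chern potential $f$, the existence of a Gauduchon background metric $\omega_0$ (supplied by \cite{Ga0}), and the elementary Chern-Ricci computation above; the role of the Astheno-K\"ahler hypothesis is solely to make Corollary~\ref{cor1} applicable, i.e.\ to guarantee that the metric $\omega_u$ produced from a Gauduchon $\omega_0$ is again Gauduchon.
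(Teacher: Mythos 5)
Your proposal is correct and follows the same route as the paper: extract a real Bott--Chern potential $F$ with $\Ric(\omega)=\psi+\ddbar F$, apply Corollary~\ref{cor1} to produce a Gauduchon metric $\ti{\omega}$ with $\ti{\omega}^n=e^{F+b}\omega^n$, and take $-\ddbar\log$ to conclude $\Ric(\ti{\omega})=\psi$. The only difference from the paper's write-up is notational ($f=-F$) together with a slightly more explicit justification of the reality of the potential and the existence of the background Gauduchon metric $\omega_0$, both of which the paper takes for granted.
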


In the special case when $(M, \omega)$ is K\"ahler then Yau's solution of the Calabi conjecture already gives
 $\tilde{\omega}$ K\"ahler (and hence Gauduchon) satisfying (\ref{gCY}) in every K\"ahler class.  The previous result of the authors in \cite{TW4} shows that in this case one can also find a Gauduchon metric $\tilde{\omega}$ satisfying (\ref{gCY}) where $\tilde{\omega}^{n-1}$ has the form $\tilde{\omega}^{n-1} = \omega_0^{n-1} + \ddbar u \wedge \omega^{n-2}$ for any given Gauduchon metric $\omega_0$.

Next, we discuss a different Monge-Amp\`ere equation which is closely related to Gauduchon's conjecture.  This equation is a modification of (\ref{ma1}) and
 first appeared\footnote{The authors independently discovered the equation (\ref{manew}) in June 2013 and completed Theorems \ref{Linftythm} and \ref{thmreduce} of this paper before \cite{Po3} was posted on the arXiv.} in the very recent preprint of Popovici \cite{Po3}.  The
solution of this equation would solve the full conjecture of Gauduchon.

Let $\omega_0$ be any Hermitian metric and $\omega$ a Gauduchon metric.  To deal with the fact that $\partial \ov{\partial} \omega^{n-2} \neq 0$ in general, we consider the following $(n-1,n-1)$ form
$$\Phi_u=\omega_0^{n-1}+\ddbar u\wedge\omega^{n-2}+\mathrm{Re}\left(\mn\de u\wedge \db(\omega^{n-2})\right).$$
The point of the definition of $\Phi_u$ is that if $\omega_0$ is Gauduchon (strongly Gauduchon) and $\Phi_u>0$ then the $(n-1)$th root of $\Phi_u$ is also Gauduchon (strongly Gauduchon).
Indeed, the
 $(n-1,n-1)$ form $\beta_u= \ddbar u\wedge\omega^{n-2}+\mathrm{Re}\left(\mn\de u\wedge \db(\omega^{n-2})\right)$ is
$\de\db$-closed.  Moreover $\ov{\partial}\beta_u$ is $\partial$-exact.   In fact, $\beta_u$ is the $(n-1,n-1)$-part of the $d$-exact $(2n-2)$ form $d(d^c u\wedge\omega^{n-2})$.

Replacing $\omega^{n-1} + \ddbar u \wedge \omega^{n-2}$ in (\ref{ma1}) by $\Phi_u$ gives a   Monge-Amp\`ere type equation.
 We conjecture that this equation can always be solved (Popovici posed the same statement as a question \cite{Po3}).

\pagebreak[3]
\begin{conjecture} \label{conjtw}
Let $M$ be a compact complex manifold with a Hermitian metric $\omega_0$ and a Gauduchon metric $\omega$.  Let $F$ be a smooth function.  Then there exists a unique pair $(u,b)$ with $u$ a smooth function on $M$ and $b$ a constant solving the equation
\begin{equation} \label{manew}
\det \left( \Phi_u \right)  =  e^{F+b} \det \left( \omega^{n-1} \right),
\end{equation}
with
$$\Phi_u=\omega_0^{n-1} + \ddbar u\wedge\omega^{n-2}+\mathrm{Re}\left(\mn\de u\wedge \db(\omega^{n-2})\right) >0$$
and $\sup_M u=0.$
\end{conjecture}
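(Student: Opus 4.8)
\textbf{Proof proposal for Conjecture \ref{conjtw}.}

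The plan is to follow the continuity method template that succeeds for Theorem \ref{mainthm}, now applied to the deformed form $\Phi_u$. First I would set up the continuity path $\det(\Phi_{u_t}) = e^{tF + b_t}\det(\omega^{n-1})$ for $t \in [0,1]$, where at $t=0$ one takes $u_0 \equiv 0$, $b_0 = 0$ (using that $\Phi_0 = \omega_0^{n-1} > 0$). Uniqueness of $(u,b)$ at each $t$ — and in particular for $t=1$ — is the easy part: if $(u_1,b_1)$ and $(u_2,b_2)$ both solve \eqref{manew}, then at a point where $u_1 - u_2$ attains its maximum one has $\ddbar(u_1 - u_2) \le 0$ and $\de(u_1-u_2)=0$, and the first-order term $\mathrm{Re}(\mn \de u \wedge \db(\omega^{n-2}))$ contributes nothing there, so the ellipticity of the operator (the coefficient of $\ddbar(u_1-u_2)$ is a positive $(n-1,n-1)$-type tensor interpolating between $\Phi_{u_1}$ and $\Phi_{u_2}$) forces $b_1 \le b_2$; by symmetry $b_1 = b_2$, and then the strong maximum principle gives $u_1 = u_2$. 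Openness follows from the implicit function theorem in Hölder spaces: the linearization of the operator $u \mapsto \log \det(\Phi_u) - \log \det(\omega^{n-1})$ at a solution is a second-order linear elliptic operator with no zeroth-order term, whose kernel (by the maximum principle argument just given, applied to the linearized equation) consists of constants, so the pair (function mod constants, constant $b$) is an isomorphism onto the appropriate space.

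The bulk of the work is closedness, i.e. a priori estimates for solutions $(u_t, b_t)$ independent of $t$. The constant $b_t$ is bounded by integrating \eqref{manew} against a suitable volume form and using that $\int_M \ddbar u_t \wedge \omega^{n-2}$ and $\int_M \mathrm{Re}(\mn \de u_t \wedge \db(\omega^{n-2}))$ need not vanish but are controlled once we know $\de\db(\omega^{n-2})$ and the torsion of $\omega$ (here one uses that $\omega$ is Gauduchon, so $\de\db(\omega^{n-1}) = 0$, to handle the top-order pairing). For the $C^0$ estimate I would run the Moser iteration / Alexandrov–Bakelman–Pucci argument as in \cite{TW2, TW4}: the new first-order term $\mathrm{Re}(\mn\de u \wedge \db(\omega^{n-2}))$ contributes a term of the form $(\text{bounded}) * Du$ to the equation, which is exactly the kind of gradient term these $L^\infty$ techniques are designed to absorb. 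The second-order estimate is the crux (this is Theorem \ref{theoremC2}-type): one must bound $\mathrm{tr}_\omega \Phi_u$ from above, using the maximum principle applied to a quantity like $\log \mathrm{tr}_\omega \Phi_u - A u + \varepsilon \, g^{p\ov q}\eta_{i\ov q}\eta_{p\ov j} g^{i \ov j}$ (the ``square of $\eta$'' term described in the introduction). The key difficulty is that differentiating $\Phi_u$ twice produces, in addition to the $T * D^3 u$ torsion terms already present for \eqref{ma1}, \emph{new} third-order terms coming from $\ddbar(\mathrm{Re}(\mn \de u \wedge \db(\omega^{n-2})))$, which involve $\db(\omega^{n-2})$ paired with three derivatives of $u$; one needs a second fortunate cancellation (analogous to the one between \eqref{mystery1} and \eqref{Line14}) between these and the torsion terms. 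Once $\mathrm{tr}_\omega \Phi_u \le C$ is known, the complex Monge-Amp\`ere equation \eqref{manew} gives a two-sided bound on $\Phi_u$ and hence on $\ddbar u$ up to lower order, so $\omega_0^{n-1} + \ddbar u \wedge \omega^{n-2}$ is uniformly positive and comparable to $\omega^{n-1}$; this is the uniform ellipticity needed to bootstrap.

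From uniform ellipticity and the $C^2$ bound, the higher-order estimates are standard: the Evans–Krylov theorem (in the form adapted to this fully nonlinear equation on the space of $(n-1,n-1)$ forms, as in \cite{TW4}) yields a $C^{2,\alpha}$ estimate, and then Schauder theory applied iteratively gives $C^\infty$ bounds on $u_t$ uniform in $t$, together with the bound on $b_t$. This closes the set of $t \in [0,1]$ for which a solution exists, which is also open and nonempty, hence equals $[0,1]$; taking $t=1$ gives the desired smooth solution $(u,b)$, and the uniqueness was already established. I expect the genuinely hard step to be the identification of the saving cancellation in the second-order estimate: the extra first-order term in $\Phi_u$ is precisely what makes this a conjecture rather than a theorem, and whether the $T * D^3 u$-type terms it generates can be absorbed by the $\varepsilon \, |\eta|^2_g$ trick (as they are for \eqref{ma1}) or require a genuinely new idea is the point on which the whole argument turns.
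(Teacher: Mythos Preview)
The statement you are attempting to prove is a \emph{conjecture} in the paper, not a theorem; the paper does not claim a proof. What the paper does establish is exactly the parts of your outline that go through: the $L^\infty$ estimate (Theorem \ref{Linftythm}), the Evans--Krylov and higher-order estimates, openness, and uniqueness (Section \ref{sectionreduce}). It then reduces the full conjecture to a single missing ingredient, the Hou--Ma--Wu type second-order estimate $\Delta u \le C(\sup_M|\nabla u|^2_g+1)$ (Theorem \ref{thmreduce}), and states explicitly that the torsion terms coming from $\mathrm{Re}(\sqrt{-1}\,\partial u \wedge \overline{\partial}(\omega^{n-2}))$ have so far prevented a proof of this estimate.

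Your proposal has a genuine gap at precisely this point. In your second-order step you write that ``one needs a second fortunate cancellation (analogous to the one between (\ref{mystery1}) and (\ref{Line14}))'' and then proceed as though it is available; but you do not exhibit any such cancellation, and in your final paragraph you yourself concede that whether the new $T * D^3u$-type terms can be absorbed ``is the point on which the whole argument turns.'' That is not a proof --- it is a restatement of why the problem is open. Two smaller issues compound this: (i) the test function you propose, $\log \tr{\omega}{\Phi_u} - Au + \varepsilon\, g^{p\ov q}\eta_{i\ov q}\eta_{p\ov j} g^{i\ov j}$, omits the Hou--Ma--Wu functions $\varphi(|\nabla u|^2_g)$ and $\psi(u)$ that are essential even in the proof of Theorem \ref{theoremC2}, and with your quantity one cannot expect to control the third-order terms; (ii) even for equation (\ref{ma1}) the maximum principle yields only $\Delta u \le C(\sup_M|\nabla u|^2_g+1)$, not $\tr{\omega}{\Phi_u}\le C$ directly, so you would still need the blow-up/Liouville argument for the gradient estimate before invoking Evans--Krylov. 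In short, your outline reproduces exactly the reduction the paper already carries out, and leaves open exactly the step the paper leaves open.
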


This conjecture would imply the conjecture of Gauduchon (see Section \ref{sectionproofs}).  Popovici \cite{Po3} proved uniqueness for this equation and computed its linearization.

We now describe some progress towards Conjecture \ref{conjtw}.  First, we establish the following \emph{a priori} $L^{\infty}$ estimate for (\ref{manew}).

\begin{theorem} \label{Linftythm}  In the setting of Conjecture \ref{conjtw},
let $u$ be a smooth solution of (\ref{manew}).  Then there exists a uniform constant $C$ depending only on $\omega$, $\omega_0$ and $\sup_M |F|$ such that
$$\| u \|_{L^{\infty}} \le C.$$
\end{theorem}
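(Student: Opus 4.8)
The plan is to adapt the Moser iteration / $L^\infty$-estimate strategy that was used for equation (\ref{ma1}) in \cite{TW4} (itself modelled on the second-named author's work with the first-named author on the Hermitian complex Monge-Amp\`ere equation (\ref{cxma}) in \cite{TW2}, via an adaptation of the ABP-based method of B{\l}ocki and Kolodziej). Recall that for $(n-1,n-1)$ forms we have the linear algebra identity: if a positive $(1,1)$ form $\omega_u$ satisfies $\omega_u^{n-1}=\Phi_u$, then $\det(\Phi_u)=\det(\omega_u^{n-1})=(\det g_u)^{n-1}$ where $g_u$ is the associated metric tensor, and the Monge-Amp\`ere equation (\ref{manew}) becomes $\omega_u^n=e^{(F+b)/(n-1)}\omega^n$ (up to harmless constants). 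Thus the point is to obtain a uniform bound for the Hermitian metric $\omega_u$ defined by $\Phi_u$, not directly for $\ddbar u$, and the extra first-order term $\mathrm{Re}(\mn\,\de u\wedge\db(\omega^{n-2}))$ in $\Phi_u$ is the only new feature compared to \cite{TW4}.

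First I would set up the iteration: for $p\geq 1$, multiply (\ref{manew}) in the form $\omega_u^n = c\, e^F\omega^n$ (absorbing $b$) by $|u|^{p}$ or by $(-u+\text{const})^{p}$ and integrate against a suitable test form. The key is the integration-by-parts identity, exactly as in \cite{TW4}: write $\beta_u=\Phi_u-\omega_0^{n-1}$, note that $\beta_u$ is a $\de\db$-closed $(n-1,n-1)$ form (as recalled in the excerpt, it is the $(n-1,n-1)$-part of the $d$-exact form $d(d^cu\wedge\omega^{n-2})$), and hence when we integrate $u^{p-1}$ against $\beta_u$ over $M$, integration by parts moves the two derivatives onto $u^{p-1}$, producing $\sim p\int u^{p-2}\mn\de u\wedge\db u\wedge\omega^{n-2}$, which controls $\int |\nabla u^{p/2}|^2\,\omega^n$ from below (here one uses that $\omega$ is Gauduchon precisely so that the $\db\omega^{n-2}$-type boundary terms created along the way vanish or are absorbable — this is where the Gauduchon hypothesis on $\omega$, and correspondingly the design of $\Phi_u$, enters). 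On the other side, $\int u^{p-1}\omega_0^{n-1}$ and $\int u^{p-1}\omega_u^{n-1}$ are estimated using $\omega_0\leq C\omega$ and, crucially, using (\ref{manew}) to control $\int u^{p-1}\omega_u^n$ in terms of $e^F$ and lower powers of $u$ via Young's inequality (the Monge-Amp\`ere structure is what makes the argument close). Combining these gives a Sobolev-type inequality $\|u\|_{L^{q p}}\leq (Cp)^{1/p}\|u\|_{L^{p}}$ for suitable $q>1$, and iterating over $p=q^k$ yields $\|u\|_{L^\infty}\leq C\|u\|_{L^{p_0}}$ for some fixed starting exponent $p_0$.

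It then remains to bound $\|u\|_{L^{p_0}}$, or more simply $\|u\|_{L^1}$, by a uniform constant; combined with $\sup_M u=0$ this controls $\int_M(-u)\,\omega^n$. This is the step I expect to be the main obstacle, since in the non-K\"ahler setting one cannot directly invoke a Poincar\'e inequality against a fixed closed form. The standard route is a Green's function argument on the Gauduchon manifold $(M,\omega)$: from $\sup_M u=0$ we get $u\leq 0$, so $-u\geq 0$; we estimate $\int_M(-u)\omega^n$ by testing the equation at the point where $-u$ is large and using that $\ddbar u\wedge\omega^{n-2}\geq -\omega_0^{n-1}$ (an inequality of $(n-1,n-1)$ forms coming from $\Phi_u>0$ minus the controlled first-order term — one must check the first-order term $\mathrm{Re}(\mn\de u\wedge\db\omega^{n-2})$ can be absorbed, e.g. by a Cauchy-Schwarz estimate paired with the gradient control already obtained, or by noting it integrates to a controllable quantity), which after pairing with $\omega$ gives $\Delta_\omega u \geq -C + (\text{lower order})$ in a weak sense, and then the Green's function estimate for the Gauduchon Laplacian yields the $L^1$ bound; here the Gauduchon condition $\de\db\omega^{n-1}=0$ is exactly what is needed for $\int_M(\Delta_\omega u)\,\omega^n=0$ and for the Green's function to behave well. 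The dependence of the final constant $C$ is then only on $\omega$, $\omega_0$, and $\sup_M|F|$, as claimed, since every constant produced along the way depends only on these data.
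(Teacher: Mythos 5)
Your overall outline — establish a Cherrier-type inequality $\int_M |\de e^{-\frac{pu}{2}}|^2\,\omega^n \le Cp\int_M e^{-pu}\omega^n$ by Moser iteration against $e^{-pu}$ (or powers of $u$), using the $\de\db$-closedness of $\beta_u = \Phi_u - \omega_0^{n-1}$ to integrate by parts, then bound $\|u\|_{L^1}$ by a Green's function argument on the Gauduchon manifold — does indeed match the skeleton of the paper's proof (Section~\ref{sectionnew}). However, there is a genuine gap precisely at the step you flag as ``the step I expect to be the main obstacle'' and then leave unresolved.

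The crucial technical ingredient in the paper is the pointwise estimate (\ref{needn}):
\begin{equation*}
\ddbar u \wedge \left(2\omega_0^{n-1}+\ddbar u\wedge\omega^{n-2}+(n-1)!E\right) \leq C(1+|\nabla u|^2)\omega^n-(n-1)!\,\ddbar u\wedge E,
\end{equation*}
which is the analogue of (\ref{need}) and is what closes the Moser iteration after the integration by parts. Its proof (the calculation (\ref{calc1})--(\ref{calc2})) hinges on Lemma~\ref{lemmatw} from \cite{TW4} \emph{and} on a cancellation of two terms of size $\sim H\,\Delta u\,\omega^n$ that look, individually, like $O(|\nabla u|\cdot\tr{\omega}{\ti\omega})$. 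These are \emph{not} absorbable by Cauchy--Schwarz against the positive terms available, because $\tr{\omega}{\ti\omega}$ is not under control at this stage; they only disappear because $-\frac{2}{n}H\Delta u\,\omega^n$ and $2(n-1)!\,\Delta u\, E\wedge\omega$ exactly cancel after applying (\ref{star}) and (\ref{stare}). Your proposal instead suggests absorbing the term $\mathrm{Re}(\mn\de u\wedge\db(\omega^{n-2}))$ ``by a Cauchy--Schwarz estimate paired with the gradient control already obtained,'' but there is no a priori gradient control available at the $L^\infty$-estimate stage — the gradient term in (\ref{needn}) is only absorbed \emph{inside} the iteration via $C\int e^{-pu}|\nabla u|^2\omega^n\le \frac{C}{p^2}\int|\de e^{-\frac{pu}{2}}|^2\omega^n$, not before. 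Similarly, your suggestion to control things ``via Young's inequality using (\ref{manew})'' replaces the structural use of Lemma~\ref{lemmatw} (an inequality on $\alpha\wedge(2\omega_0^{n-1}+\alpha\wedge\omega^{n-2})$, proved from the Monge--Amp\`ere equation in \cite{TW4}) with something unspecified; the dangerous mixed terms involving $\Delta u$ times the first-order quantity $E$ are exactly where this breaks down without the cancellation. In short: you correctly identify the outline and the role of the Gauduchon condition and $\de\db$-closedness, but you are missing the key new idea of the section, namely the exact cancellation recorded in (\ref{calc2}); without it the ``Cherrier-type'' inequality does not close and the rest of the argument cannot proceed.
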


In the proof of this result we again make use of a cancellation between some bothersome torsion terms (see (\ref{calc2}) below). 
As a consequence of this and suitable modifications of arguments of \cite{TW4}, it follows that Conjecture \ref{conjtw} can be reduced to an \emph{a priori} second order estimate.

\begin{theorem} \label{thmreduce} Let $u$ solve (\ref{manew}) as above.  If there exists a uniform constant $C$, depending only on $\omega$, $\omega_0$ and bounds for $F$ such that
\begin{equation} \label{hmwtype}
\Delta u \le C( \sup_M | \nabla u|^2 +1),
\end{equation}
then  Conjecture \ref{conjtw} (and hence also Conjecture \ref{conjG}) holds.
\end{theorem}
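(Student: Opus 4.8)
The plan is to establish existence by the continuity method, feeding in the $L^\infty$ bound of Theorem~\ref{Linftythm} and the hypothesized estimate~(\ref{hmwtype}) to produce the higher-order a priori estimates; uniqueness is already known by Popovici~\cite{Po3}. Set $\hat F=\log\bigl(\det(\omega_0^{n-1})/\det(\omega^{n-1})\bigr)$, a smooth function, and for $t\in[0,1]$ let $F_t=(1-t)\hat F+tF$. Consider the family
\[
\det(\Phi_{u_t})=e^{F_t+b_t}\det(\omega^{n-1}),\qquad \Phi_{u_t}>0,\quad \sup_M u_t=0,
\]
so that $(u_0,b_0)=(0,0)$ solves the $t=0$ equation while the $t=1$ equation is~(\ref{manew}). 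Let $S\subseteq[0,1]$ be the set of $t$ admitting a solution; then $0\in S$, and since $\|F_t\|_{C^k}\le\max(\|F\|_{C^k},\|\hat F\|_{C^k})$ for every $k$, the constants in Theorem~\ref{Linftythm} and in~(\ref{hmwtype}) are independent of $t$ along the family. It remains to show $S$ is open and closed.

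For openness I would invoke the implicit function theorem in H\"older spaces, as in~\cite{TW2,TW4}. By Popovici's computation~\cite{Po3}, the linearization of~(\ref{manew}) at a solution $u$, $v\mapsto\frac{d}{ds}\big|_{s=0}\log\det(\Phi_{u+sv})$, is a second-order \emph{elliptic} operator $L$ with no zeroth-order term (because $\Phi_u$ is affine in $\de u,\db u$ and $\ddbar u$, its variation involves only $\de v,\db v$ and $\ddbar v$); in particular $L1=0$. Hence $L\colon C^{2,\alpha}(M)\to C^\alpha(M)$ is Fredholm of index $0$ with one-dimensional kernel (the constants), its cokernel is one-dimensional, and since $L1=0$ the kernel of $L^*$ is spanned by a strictly positive function $\phi_0$ (as in~\cite{TW2}). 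Thus, writing $\mu_0=\phi_0\,\omega^n$, the map $v\mapsto L(v)$ is an isomorphism from $C^{2,\alpha}(M)/\mathbb{R}$ onto $\{g\in C^\alpha(M):\int_M g\,\mu_0=0\}$, and $b_t$ is recovered as $\bigl(\int_M\mu_0\bigr)^{-1}\int_M\bigl(\log\det(\Phi_{u_t})-\log\det(\omega^{n-1})-F_t\bigr)\mu_0$. Since $\{\sup_M u=0\}$ is a slice transverse to the constants, the implicit function theorem gives openness of $S$.

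For closedness I would prove uniform $C^\infty$ bounds for the solutions along $S$. First, Theorem~\ref{Linftythm} gives $\|u_t\|_{L^\infty}\le C$; evaluating~(\ref{manew}) at the extrema of $u_t$, where $\de u_t=0$ and the complex Hessian of $u_t$ has a sign, also gives $|b_t|\le C$. The crucial remaining step is a uniform gradient bound, which I would obtain following~\cite{TW4}; one natural route is a blow-up argument in the spirit of Dinew--Kolodziej. If $\sup_M|\nabla u_{t_k}|\to\infty$ for some sequence, rescale near points of near-maximal gradient at the scale $\sup_M|\nabla u_{t_k}|$; the $L^\infty$ bound keeps the rescaled functions bounded, the hypothesis~(\ref{hmwtype}) keeps their complex Laplacians bounded above, and the positivity $\Phi_{u_{t_k}}>0$ forces, in the limit, $(n-1)$-plurisubharmonicity on $\mathbb{C}^n$; together these give a uniform $C^{1,1}$ bound on the rescaled functions and hence subconvergence to a bounded $(n-1)$-plurisubharmonic function on $\mathbb{C}^n$ solving the homogeneous equation $\det(\ddbar u\wedge\omega_E^{n-2})=0$, which by a Liouville-type theorem (as in~\cite{TW4}) is constant --- contradicting that its gradient has unit norm at the origin. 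With $\sup_M|\nabla u_t|\le C$ in hand,~(\ref{hmwtype}) gives $\Delta u_t\le C$; combined with $\Phi_{u_t}>0$ and $\det(\Phi_{u_t})$ bounded above and below, this makes $\Phi_{u_t}$, viewed as a $(1,1)$ form under~(\ref{bijection}), uniformly equivalent to $\omega$, so~(\ref{manew}), written as $\log\det(\Phi_{u_t})=F_t+b_t+\log\det(\omega^{n-1})$, is uniformly elliptic and concave in the complex Hessian. The complex version of the Evans--Krylov estimate then yields a uniform $C^{2,\alpha}$ bound, and differentiating the equation and bootstrapping with Schauder estimates gives uniform $C^k$ bounds for all $k$. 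Hence $S$ is closed, so $S=[0,1]$; in particular~(\ref{manew}) is solvable, which is Conjecture~\ref{conjtw}, and together with the implication Conjecture~\ref{conjtw}$\,\Rightarrow\,$Conjecture~\ref{conjG} recorded in Section~\ref{sectionproofs} this completes the proof.

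I expect the gradient estimate to be the main obstacle: one must verify that, in the non-K\"ahler setting, neither the torsion of $\omega$ nor the extra term $\mathrm{Re}(\mn\de u\wedge\db(\omega^{n-2}))$ in $\Phi_u$ interferes with the argument of~\cite{TW4} --- in the blow-up approach, that neither survives the rescaling, so that the limit on $\mathbb{C}^n$ is genuinely $(n-1)$-plurisubharmonic and solves the homogeneous equation --- and this is precisely where the conditional estimate~(\ref{hmwtype}) is consumed. The remaining ingredients (the complex Evans--Krylov estimate, the Schauder bootstrap, and the Fredholm/implicit-function-theorem package for openness) are by now standard and should require only routine modifications of~\cite{TW2,TW4}.
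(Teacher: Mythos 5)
Your proposal follows the paper's strategy point for point: continuity method starting at $u\equiv 0$, openness from the ellipticity of the first-order-plus-second-order linearization together with the Fredholm/maximum-principle/IFT package \`a la \cite{Ga0,TW2}, closedness from $L^\infty$ (Theorem~\ref{Linftythm}), gradient bound by blow-up and a Liouville theorem, then $\Delta u$ from~(\ref{hmwtype}), uniform ellipticity, Evans--Krylov and Schauder bootstrap, with uniqueness from Popovici or~\cite{TW4}. So the approach is correct.

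One remark on your assessment of where the difficulty lies, which is inverted relative to the paper. You anticipate the gradient estimate as the main obstacle and deem Evans--Krylov a routine citation. The paper finds the opposite: the blow-up argument from \cite[Theorem~5.1]{TW4} goes through with only a one-line observation (that $*E(u)$ is linear in $\partial u$ and hence vanishes under the rescaling), whereas the $C^{2,\alpha}$ estimate is explicitly flagged as having ``new difficulties.'' The reason is that, unlike in~(\ref{ma1}) where $\tilde g_{i\ov j}$ depends only on $\ddbar u$, here $\tilde g_{i\ov j}$ also depends on $\partial u,\ov\partial u$ through $Z=*E$. Differentiating the equation twice then produces $|\nabla\nabla u|_g^2$ terms (and first-order terms via $\nabla Z$) that the standard Evans--Krylov iteration does not control. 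The paper overcomes this by proving the auxiliary estimate~(\ref{goa}), namely $\Theta^{i\ov j}\partial_i\partial_{\ov j}|\nabla u|_g^2 \ge C_1^{-1}|u_{ij}|_g^2 - C$ (whose proof itself uses the once-differentiated equation and the bound $|\nabla Z|_g\le C(1+|u_{ij}|_g)$), and then running the oscillation argument on the modified test functions $w_\nu = u_{\gamma_\nu\ov\gamma_\nu} + A|\nabla u|_g^2$ rather than on $u_{\gamma_\nu\ov\gamma_\nu}$ alone. This step is not a straightforward citation of the complex Evans--Krylov theorem; your write-up should supply it. Everything else in your proposal — including the linear-in-$t$ interpolation $F_t$ which makes $(u_0,b_0)=(0,0)$ a solution, and the observation that $L$ has no zeroth-order term so its kernel is the constants and the kernel of $L^*$ is spanned by a positive function — matches the paper's proof.
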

An estimate of the type (\ref{hmwtype}) was proved by Hou-Ma-Wu \cite{HMW} for the complex Hessian equations, and similar arguments are used for Monge-Amp\`ere type equations in \cite{TW4} and in the proof of Theorem \ref{mainthm}.  We expect that (\ref{hmwtype}) does indeed hold for solutions of (\ref{manew}), but torsion terms arising from quantity $\textrm{Re}(\sqrt{-1}\partial u \wedge \ov{\partial} (\omega^{n-2}))$ have so far thwarted our attempts to prove it.

\medskip

The outline of the paper is as follows.  In Sections \ref{sectionzero} and \ref{sectionsecond} we prove zero and second order \emph{a priori} estimates for a solution $u$ of the equation (\ref{ma1}).  From these estimates together with arguments adapted from \cite{TW4}, we deduce in Section \ref{sectionproofs} the proofs of Theorem \ref{mainthm}, Corollary \ref{cor1} and Corollary \ref{cor2}.  In Sections \ref{sectionnew} and \ref{sectionreduce} we discuss the Monge-Amp\`ere equation (\ref{manew}) and prove Theorems \ref{Linftythm} and  \ref{thmreduce} respectively.

\medskip

The authors are very grateful to Wei Sun for pointing out a couple of errors in a previous version of this paper.

\section{Zero order estimate} \label{sectionzero}

In this section we prove an \emph{a priori} zeroth order estimate for a solution of the Monge-Amp\`ere equation (\ref{ma1}).  The argument we give is similar to the one when $\omega$ is K\"ahler \cite{TW4}, except that we have to make use of arguments of \cite{Ch, TW1, TW2} to deal with torsion terms that arise when integrating by parts.

First,  we introduce some notation which will be used throughout the paper.
Given a Hermitian metric $g$ on $M$, which we write in local holomorphic coordinates as $(g_{i\ov{j}}) >0$, we define its associated $(1,1)$ form $\omega$ to be
$$\omega= \sqrt{-1} g_{i\ov{j}} dz^i \wedge d\ov{z}^j.$$
Here we are summing in the repeated indices $i$ and $j$.  We commonly refer to both $\omega$ and $g$ as ``the Hermitian metric''.

Equation (\ref{ma1}) is an equation of $(n-1, n-1)$ forms.
Given a smooth function $u$ on $M$ we write $\Psi_u$ for the positive definite $(n-1,n-1)$ form
$$\Psi_u=\omega_0^{n-1}+\ddbar u\wedge\omega^{n-2}>0.$$
Recall that an $(n-1, n-1)$ form $\Psi$ is positive definite if
$$\Psi \wedge \sqrt{-1} \gamma \wedge \ov{\gamma} > 0,$$
for all nonzero $(1,0)$ forms $\gamma$.  For $\omega$ as above, we define the determinant of $\omega^{n-1}$ to be
$$\det (\omega^{n-1}) = (\det g)^{n-1},$$
and  this defines the determinant of a general positive definite $(n-1,n-1)$ form via (\ref{bijection}).

As in \cite{TW4}, to solve (\ref{ma1}) we will first apply the Hodge star operator $*$ of $\omega$ to rewrite it as an equation of $(1,1)$ forms.  Define
$$\tilde{\omega}: = \frac{1}{(n-1)!}*\Psi_u, \quad \omega_h=\frac{1}{(n-1)!}*\omega_0^{n-1}.$$
Then $\tilde{\omega}$ and $\omega_h$ are Hermitian metrics on $M$.   Write
$$\tilde{\omega} = \sqrt{-1} \tilde{g}_{i\ov{j}} dz^i \wedge d\ov{z}^{j}, \quad \omega_h = \sqrt{-1} h_{i\ov{j}} dz^i \wedge d\ov{z}^{j}.$$
 We have (see Section 2 of \cite{TW4})
\begin{equation}\label{metr}
\ti{\omega}=\omega_h+\frac{1}{n-1}\left((\Delta u)\omega-\ddbar u\right)>0,
\end{equation}
and the equation (\ref{ma1}) becomes
\begin{equation} \label{ma2}
\tilde{\omega}^n = e^{F+b} \omega^n,
\end{equation}
where we recall that we normalize $u$ by $\sup_M u=0$.
Taking the trace of \eqref{metr} we see that
\begin{equation}\label{lapl}
\tr{\omega}{\ti{\omega}}=\tr{\omega}{\omega_h}+\Delta u,
\end{equation}
and therefore
\begin{equation}\label{ddb}
\ddbar u =(n-1)\omega_h+ (\tr{\omega}{\ti{\omega}}-\tr{\omega}{\omega_h})\omega-(n-1)\ti{\omega}.
\end{equation}

Before we state the $L^{\infty}$ estimate for $u$, we first note that, by looking at the points where $u$ achieves its maximum and its minimum, we immediately conclude that
\begin{equation}\label{boundb}
|b|\leq\sup_M|F|+C,
\end{equation}
for a uniform constant $C$ (which depends only on $(M,\omega_0)$ and $\omega$).
 Let $u$ solve  (\ref{ma2}) with $\sup_M u=0$, where we recall that $\tilde{\omega}$ is given by (\ref{metr}).  We will prove:

\begin{theorem}\label{linfty}
There exists a constant $C$ which depends only on $\omega_0,\omega$ and $\sup_M|F|$, such that
$$\| u \|_{L^{\infty}} \le C.$$
\end{theorem}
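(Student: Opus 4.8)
The plan is to prove the $L^\infty$ estimate by the Alexandrov-Bakelman-Pucci / De Giorgi-Moser iteration technique, following the Hermitian-geometry refinement developed in \cite{Ch, TW1, TW2} and adapted to the $(n-1,n-1)$ setting as in \cite{TW4}. The starting point is that equation (\ref{ma2}), namely $\ti\omega^n = e^{F+b}\omega^n$ with $\ti\omega$ given by (\ref{metr}), together with the bound (\ref{boundb}) on $b$, is formally the same kind of complex Monge-Amp\`ere equation that was solved in \cite{TW2}; the only essential difference is that the reference metric is the linear combination (\ref{metr}) rather than $\omega_0 + \ddbar u$ itself. Concretely, one sets $p\ge 1$ and tests the equation against $|u|^{p-1}u$ (or against $(-u+\sup u)$ suitably truncated), integrating $\ddbar u \wedge (\text{forms})$ by parts over $M$.

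The key steps, in order, are as follows. First I would rewrite $\ddbar u$ using (\ref{ddb}) so that the quantity $\int_M |u|^{p-2}\sqrt{-1}\de u\wedge\db u\wedge\omega^{n-1}$ — which controls $\|\nabla|u|^{p/2}\|_{L^2}^2$ — appears with a good sign after integration by parts. Here, because $\omega$ is merely Hermitian and not K\"ahler, integrating $\ddbar u \wedge \omega^{n-1}$ (or against $|u|^{p-2}u \, \omega^{n-1}$) by parts produces extra terms involving the torsion $T$ of $\omega$ and first derivatives of $u$; these are exactly the torsion terms that must be absorbed. One handles them by the now-standard trick from \cite{Ch, TW1}: a torsion term of the form $\int |u|^{p-1}\, T * \de u \wedge (\cdots)$ is estimated using Cauchy-Schwarz as $\tfrac12\int |u|^{p-2}\sqrt{-1}\de u\wedge\db u\wedge\omega^{n-1} + C\int |u|^p\omega^n$, so half of the good gradient term survives. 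Second, the right-hand side contributes $\int |u|^{p-1}(1 - e^{F+b})\omega^n$, bounded by $Cp\int|u|^{p-1}\omega^n$ using (\ref{boundb}) and $\sup_M|F|$. Third, combining these gives a Sobolev-type inequality $\|u\|_{L^{np/(n-1)}}^p \le Cp\,\|u\|_{L^p}^{p-1}\big(\|u\|_{L^p}+1\big)$ (after applying the ordinary Sobolev inequality on $(M,\omega)$ to $|u|^{p/2}$), and a Moser iteration over $p\to\infty$ then yields $\|u\|_{L^\infty} \le C(1 + \|u\|_{L^1})$. Fourth and finally, one removes the dependence on $\|u\|_{L^1}$: since $\sup_M u = 0$, it suffices to bound $\int_M (-u)\,\omega^n$, and this follows from a Green's function / Poincar\'e-type argument — $\ddbar u \wedge \omega^{n-1}$ has integral controlled by $b$ and the geometry via (\ref{lapl}), so $-u$ is dominated by its average plus a fixed constant, cf. the analogous step in \cite{TW1, TW2}.

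The main obstacle is the appearance of the torsion terms when integrating by parts, and more subtly, the fact that the reference form in (\ref{metr}) is not of the shape $\omega_0 + \ddbar(\cdot)$: the combination $\tfrac{1}{n-1}\big((\Delta u)\omega - \ddbar u\big)$ mixes a Laplacian term with $\ddbar u$, so that the naive integration by parts does not immediately produce a pure gradient square. The resolution is precisely the identity (\ref{ddb}), which re-expresses $\ddbar u$ in terms of $\tr\omega{\ti\omega}$, $\tr\omega{\omega_h}$, $\omega_h$ and $\ti\omega$; wedging with $|u|^{p-2}u\,\omega^{n-1}$ and using that $\ti\omega^n = e^{F+b}\omega^n$ converts the $\ti\omega$ term into a controlled quantity, while $\tr\omega{\ti\omega}\,\omega\wedge\omega^{n-1} = n\,\tr\omega{\ti\omega}\,\omega^n$ is positive and can be discarded or used. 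I expect that, after this algebraic rearrangement, the torsion terms are all of strictly lower order in $u$ and are absorbed by the Cauchy-Schwarz trick above, so that the Moser iteration goes through exactly as in the K\"ahler case of \cite{TW4}. The only genuinely new bookkeeping compared to \cite{TW4} is keeping track of the torsion of $\omega$ throughout; no new idea beyond \cite{Ch, TW1, TW2} is needed for this particular estimate.
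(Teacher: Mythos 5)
Your general outline (Moser iteration with torsion absorption, plus a Green's function/Poincar\'e step to control $\|u\|_{L^1}$) points in the right direction, but the proposal has a genuine gap: it never produces the pointwise inequality that actually drives the iteration. The paper's proof hinges on Lemma \ref{lemmatw} (Lemma 3.2 of \cite{TW4}), a \emph{pointwise} linear-algebra statement that, applied to $\alpha = \ddbar u$, gives
$$\ddbar u \wedge \bigl(2\omega_0^{n-1}+\ddbar u\wedge\omega^{n-2}\bigr) \le C\omega^n,$$
i.e.\ inequality (\ref{need}). It is this bound, together with a careful integration-by-parts of the left-hand side with the exponential weight $e^{-pu}$, that yields the Cherrier-type inequality (\ref{cherr}), which is then fed into \cite[Lemma 2.2]{TW2} to get a lower bound on $|\{u \le \inf_M u + C_0\}|$.

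Your alternative route --- rewriting $\ddbar u$ via (\ref{ddb}), wedging with a weight times $\omega^{n-1}$, and claiming the $\tr{\omega}{\ti\omega}$ term ``is positive and can be discarded or used'' --- does not close. Since $u\le 0$ the weight $|u|^{p-2}u$ is nonpositive, and the two $\ti\omega$-contributions combine as
$-\int|u|^{p-1}\tr{\omega}{\ti\omega}\,\omega^n + (n-1)\int|u|^{p-1}\ti\omega\wedge\omega^{n-1} = -\tfrac{1}{n}\int|u|^{p-1}\tr{\omega}{\ti\omega}\,\omega^n$, which is negative and of the wrong order: the equation $\ti\omega^n = e^{F+b}\omega^n$ controls $\det\ti g$, not $\tr{\omega}{\ti\omega}$, so this term is not bounded. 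Lemma \ref{lemmatw} is precisely what resolves this, through an AM-GM argument that plays $\tr{\omega}{\ti\omega}$ against $\tr{\ti\omega}{\omega}$; it supplies the quadratic expression in $\ddbar u$ whose integral both contains the good gradient term and is bounded above. You should identify and prove (or cite) a statement of that type, rather than trying to close the estimate by sign considerations alone. Finally, for the last step, your Green's function sketch is in the right spirit, but you should make explicit that one must pass to a Gauduchon conformal representative of $\omega$ (so that $\int_M \Delta' u\,\omega'^n=0$); without the Gauduchon condition the Green's formula picks up uncontrolled torsion boundary terms.
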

\begin{proof}
We make use of the following lemma from
 \cite{TW4} (see Lemma 3.2 there):
 \begin{lemma} \label{lemmatw} Assume that Hermitian metrics $\omega, \omega_h, \omega_0$ and $\tilde{\omega}$ are related by
 $$\omega_h = \frac{1}{(n-1)!} * \omega_0^{n-1} \quad  \textrm{and} \quad  \tilde{\omega}^n = e^{\ti{F}} \omega^n,$$
for $*$ the Hodge star operator of $\omega$. Define
 \begin{equation}\label{1alpha}
 \alpha = (n-1) \omega_h + (\emph{tr}_{\omega}{\tilde{\omega}} - \emph{tr}_{\omega}{\omega_h})\omega -(n-1)\tilde{\omega}.
 \end{equation}
 Then
 $$\alpha \wedge (2\omega_0^{n-1} + \alpha \wedge \omega^{n-2}) \le C \omega^n,$$
 for a constant $C$ depending only on $\sup_M |\ti{F}|$, $\omega$ and $\omega_0$ (in particular, $C$ is independent of $\tilde{\omega}$).
 \end{lemma}

Note that the lemma is a pointwise statement about Hermitian metrics.  Here, we have
$\ddbar u = \alpha$ as defined by (\ref{1alpha}) and $\ti{F}=F+b$. Using \eqref{boundb} and this lemma, we conclude that
\begin{equation}\label{need}
\ddbar u \wedge (2\omega_0^{n-1}+\ddbar u\wedge\omega^{n-2})\leq C\omega^n,
\end{equation}
for a constant $C$ as in the statement of Theorem \ref{linfty} (we will refer to such constants as uniform constants).

We  make use of  \eqref{need} in a Moser iteration argument. We compute for $p$ sufficiently large,
\begin{equation} \label{c1m}
\begin{split}
\lefteqn{  \int_M e^{-pu} \ddbar u \wedge \left( 2\omega_0^{n-1}+\ddbar u\wedge\omega^{n-2}  \right) } \\
= {} & p\int_M e^{-pu} \sqrt{-1} \partial u \wedge \ov{\partial} u \wedge  \left( 2\omega_0^{n-1}+\ddbar u\wedge\omega^{n-2}  \right)  \\
&+\int_M  e^{-pu}\mn \, \db u \wedge \de \left( 2\omega_0^{n-1}+\ddbar u\wedge\omega^{n-2}   \right) \\
=: {} & (A) + (B).
\end{split}
\end{equation}
We use the inequality
$$\omega_0^{n-1}+\ddbar u\wedge\omega^{n-2}>0,$$
to see that
\begin{equation} \label{c2m}
(A) \ge  \frac{1}{Cp} \int_M \sqrt{-1} \partial e^{-\frac{pu}{2}} \wedge \ov{\partial} e^{-\frac{pu}{2}} \wedge  \omega^{n-1},
\end{equation}
for a uniform $C>0$.
For $(B)$, using $e^{-pu} \sqrt{-1} \, \ov{\partial}u = -\frac{1}{p} \sqrt{-1} \ov{\partial} e^{-pu}$, we have, for $p$ sufficiently large,
\begin{equation} \label{c3m}
\begin{split}
(B)
 = {} & \frac{1}{p}\int_M e^{-pu}\mn\db\de  \left( 2\omega_0^{n-1}+\ddbar u\wedge\omega^{n-2}  \right) \\
= {}  & \frac{1}{p}\int_M e^{-pu} \ddbar u \wedge \ddbar \omega^{n-2}  + \frac{2}{p} \int_M e^{-pu} \ddbar \omega_0^{n-1} \\
={} & \int_M e^{-pu}\sqrt{-1} \partial u \wedge \ov{\partial} u \wedge \ddbar \omega^{n-2} + \frac{2}{p} \int_M e^{-pu} \ddbar \omega_0^{n-1} \\
= {} & \frac{4}{p^2} \int_M \sqrt{-1} \partial e^{-\frac{pu}{2}} \wedge \ov{\partial} e^{-\frac{pu}{2}} \wedge \ddbar \omega^{n-2}  + \frac{2}{p} \int_M e^{-pu} \ddbar \omega_0^{n-1} \\
\ge {} & - \frac{1}{2Cp} \int_M \sqrt{-1} \partial e^{-\frac{pu}{2}} \wedge \ov{\partial} e^{-\frac{pu}{2}} \wedge \omega^{n-1} - \frac{C'}{p} \int_M e^{-pu} \omega^n,
\end{split}
\end{equation}
where the constant $C$ is the same as the one in (\ref{c2m}).
On the other hand, multiplying \eqref{need} by $e^{-pu}$ and integrating, we have
\begin{equation} \label{c4m}
\begin{split}
\int_M e^{-pu} \ddbar u \wedge \left( 2\omega_0^{n-1}+\ddbar u\wedge\omega^{n-2} \right)
 \le {} & C \int_M e^{-pu} \omega^n.
\end{split}
\end{equation}
Combining (\ref{c1m}), (\ref{c2m}), (\ref{c3m}), (\ref{c4m}) we conclude that for $p$ sufficiently large we have the ``Cherrier-type'' inequality (see \cite{Ch, TW2})
\begin{equation}\label{cherr}
\int_M |\partial e^{-\frac{pu}{2}}|_g^2\omega^n\leq Cp\int_M e^{-pu}\omega^n.
\end{equation}
In \cite[Lemma 2.2]{TW2} we proved that \eqref{cherr} implies that
$|\{u\leq \inf_M u+C_0\}|\geq \delta>0$, for some uniform $\delta, C_0$.
We give two proofs of how to derive a $C^0$ bound on $u$ from this.

For the first proof, note that by Gauduchon's Theorem \cite{Ga0}, there exists a smooth function $\sigma$ on $M$ so that $g'=e^{\sigma} g$ is Gauduchon.  Write $\Delta'=g'^{i\ov{j}} \partial_i \partial_{\ov{j}}$ for the Laplacian of $g'$ acting on functions.
We use the existence of a Green's function for $\Delta'$: since $\Delta'$ is an elliptic second order differential operator, with the kernel consisting of just constants, standard linear PDE theory (see e.g. \cite[Appendix A]{AS}) shows that there exists a green function $G$ for $\Delta'$ which satisfies
$G(x,y)\geq -C, \|G(x,\cdot)\|_{L^1}\leq C,$ and
$$u(x)=\frac{1}{\int_M \omega'^n}\int_M u\omega'^n -\int_M \Delta' u(y) G(x,y)\omega'^n(y),$$
for all $u$ and $x$. Since $\omega'$ is Gauduchon we have $\int_M \Delta' u\omega'^n=0$.
Therefore, we are free to add a large constant to $G(x,y)$ to make it nonnegative, while preserving the same Green formula. Using
$$\Delta' u = e^{-\sigma} \Delta u \ge - e^{-\sigma} \tr{\omega}{\omega_h}\geq -C,$$ and $\sup_M u=0$, we immediately deduce that $\int_M (-u)\omega'^n\leq C.$
But then we have
$$-\delta\inf_M u\leq\int_{\{u\leq \inf_M u+C_0\}}(-u+C_0)\leq C.$$

The second proof uses Moser iteration, in the spirit of \cite{TW2}.
Let $v=u-\inf_M u$, so that $\Delta' v \geq -C$.
For any $p\geq 1$ compute
\begin{equation}\label{comp}
\begin{split}
\int_M |\de v^{\frac{p+1}{2}}|^2_{g'}\omega'^n&=\frac{n(p+1)^2}{4}\int_M \mn v^{p-1}\de v\wedge\db v\wedge\omega'^{n-1}\\
&=\frac{n(p+1)^2}{4p}\int_M \mn\de v^p\wedge\db v\wedge\omega'^{n-1}\\
&=\frac{(p+1)^2}{4p}\int_M  v^p(-\Delta' v)\omega'^n+\frac{n(p+1)}{4p}\int_M \mn\db v^{p+1}\wedge\de(\omega'^{n-1})\\
&=\frac{(p+1)^2}{4p}\int_M  v^p(-\Delta' v)\omega'^n\\
&\leq C\frac{(p+1)^2}{4p}\int_M v^p\omega'^n,
\end{split}
\end{equation}
using the Gauduchon condition $\de\db(\omega'^{n-1})=0$.  Since $\sigma$ is bounded we can switch from $\omega$ to $\omega'$ and obtain:
\begin{equation}\label{comp2}
\begin{split}
\int_M |\de v^{\frac{p+1}{2}}|^2_{g}\omega^n
&\leq C\frac{(p+1)^2}{4p}\int_M v^p\omega^n,
\end{split}
\end{equation}

A standard Moser iteration argument implies that
$$-\inf_M u=\sup_M v\leq C\int_M v\omega^n+C.$$
To bound $\|v\|_{L^1}$, use the Poincar\'e inequality and \eqref{comp2} with $p=1$ to get
$$\|v-\underline{v}\|_{L^2}\leq C\left(\int_M|\de v|^2_g\omega^n\right)^{\frac{1}{2}}\leq C'\|v\|_{L^1}^{\frac{1}{2}},$$
where $\underline{v}$ is the average of $v$ with respect to $\omega^n$. But also
$$\frac{\delta}{\int_M\omega^n}\|v\|_{L^1}=\delta \underline{v}\leq \int_{\{v\leq C_0\}}\underline{v}\omega^n\leq \int_{\{v\leq C_0\}} (|v-\underline{v}|+C_0)\omega^n\leq \|v-\underline{v}\|_{L^1}+C,$$
hence
$$\|v\|_{L^1}\leq C(\|v-\underline{v}\|_{L^1}+1)\leq C(\|v-\underline{v}\|_{L^2}+1)\leq C(\|v\|_{L^1}^{\frac{1}{2}}+1),$$
which implies that $\|v\|_{L^1}\leq C$, and we are done.
\end{proof}

\section{Second order estimate} \label{sectionsecond}

In this section we continue the proof of Theorem \ref{mainthm} by establishing an
 estimate on the metric $\tilde{g}$ in terms of the gradient of $u$.  The setup is the same as in the previous section, and in particular, $\tilde{g}$ is a Hermitian metric given by
 \begin{equation} \label{t1}
\tilde{g}_{i\ov{j}} = h_{i\ov{j}} + \frac{1}{n-1} \left( (\Delta u)g_{i\ov{j}} -u_{i\ov{j}} \right),
\end{equation}
which solves
\begin{equation} \label{es3}
\det \tilde{g} = e^{F+b} \det g.
\end{equation}

We prove:

\begin{theorem} \label{theoremC2}
There exists a uniform constant $C$ depending only on $(M, \omega_0), \omega$ and bounds for $F$ such that
$$\emph{tr}_{g}{\tilde{g}} \le C( \sup_M |\nabla u|^2_g + 1).$$
\end{theorem}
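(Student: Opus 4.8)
The plan is to run a maximum principle argument on a carefully chosen test quantity, following the Hou--Ma--Wu strategy adapted to the Hermitian setting as in \cite{TW4}, but with the new ingredient flagged in the introduction. Concretely, one works with the metric $\tilde g$ given by (\ref{t1}), solving (\ref{es3}), and sets $\eta_{i\ov j}$ to be the tensor from \cite{TW4}, namely the tensor whose trace recovers $\tr{g}{\tilde g}$ (so that $\eta_{i\ov j} = h_{i\ov j} + \frac{1}{n-1}((\Delta u) g_{i\ov j} - u_{i\ov j})$, up to the conventions there). I would apply the maximum principle to a quantity of the schematic form
$$
Q = \log \big( \tr{g}{\tilde g} \big) + \tau\, g^{p\ov q}\eta_{i\ov q}\eta_{p\ov j}\,\frac{\ ?\ }{\ ?\ } \ \cdots
$$
more precisely $Q = \log(\lambda_1) + \phi(|\nabla u|^2_g) + \psi(u)$ or, as is cleaner here, $Q = \log(\tr{g}{\tilde g}) - A u + \tau\, \frac{g^{p\ov q}\eta_{i\ov q}\eta_{p\ov j}}{(\tr{g}{\tilde g})}$-type expression, where $A, \tau$ are constants to be fixed and the ``square of $\eta$ with respect to $g$'' term $g^{p\ov q}\eta_{i\ov q}\eta_{p\ov j}$ is the technical innovation: when we differentiate it twice and hit it with the linearized operator $\tilde L = \tilde g^{i\ov j}\partial_i\partial_{\ov j}$, it produces \emph{positive} third-derivative terms $\tilde g^{i\ov j}\tilde g^{k\ov l}|D_i \eta_{k\ov l}|^2$-type contributions which are exactly what is needed to dominate the bad torsion terms.

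The key steps, in order: (1) Choose coordinates at the maximum point of $Q$ so that $g_{i\ov j} = \delta_{ij}$ and $\tilde g_{i\ov j}$ is diagonal; differentiate the equation $\log\det\tilde g = F+b+\log\det g$ once and twice to get the standard first- and second-order identities, being careful that in the Hermitian case $\partial\bar\partial$ of $u$ does not commute freely — commutators bring in the torsion $T$ of $\omega$ and its covariant derivatives. (2) Compute $\tilde L(\log \tr{g}{\tilde g})$. The genuinely new feature versus the Kähler case is the appearance of terms of the form $T * D^3 u$, i.e. torsion contracted against three derivatives of $u$ (equivalently one derivative of $\tilde g$); these have no sign and cannot be absorbed by Cauchy--Schwarz into the ``good'' term $\frac{1}{\tr{g}{\tilde g}}\tilde g^{i\ov j}\partial_i(\tr{g}{\tilde g})\partial_{\ov j}(\tr{g}{\tilde g})$ alone. (3) Compute $\tilde L$ of the auxiliary term $g^{p\ov q}\eta_{i\ov q}\eta_{p\ov j}$ (or its normalized version): this yields a genuinely positive quantity $\sim \tilde g^{i\ov j}\sum|D\eta|^2$, and one checks that with $\tau$ chosen small but fixed, these positive terms control all but (at most) a couple of the $T*D^3u$ terms. (4) Observe the ``fortunate cancellation'': the remaining two torsion terms — precisely the ones labelled (\ref{mystery1}) and (\ref{Line14}) in the paper — cancel against each other (this is where the specific algebraic structure of the $(n-1,n-1)$ equation, i.e. the relation (\ref{metr}) between $\tilde g$ and the Hessian of $u$, is used). (5) Handle the term $-Au$ and a term involving $|\nabla u|^2_g$: the function $u$ is already bounded by Theorem \ref{linfty}, so $-Au$ contributes $-A\,\tilde L u = -A(\tr{\tilde g}{g}) \cdot(\text{const}) + \ldots$, giving a large negative multiple of $\tr{\tilde g}{g}$ that dominates lower-order junk and, crucially, gives a positive constant times $\tr{\tilde g}{g}$, which by the arithmetic-geometric mean inequality together with (\ref{es3}) controls $(\tr{g}{\tilde g})^{1/(n-1)}$ from below — this is the mechanism that forces an upper bound at the max of $Q$. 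Gradient terms of order $|\nabla u|^2$ are allowed to appear on the right-hand side since the statement only claims $\tr{g}{\tilde g} \le C(\sup_M|\nabla u|^2_g + 1)$. (6) Conclude: at the maximum point of $Q$ one gets $\tr{g}{\tilde g} \le C(\sup_M |\nabla u|^2_g + 1)$ there, and since $Q$ dominates $\log \tr{g}{\tilde g}$ up to bounded terms, the bound propagates to all of $M$.

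The main obstacle — and the heart of the proof — is step (3)--(4): showing that the positive terms coming from $\tilde L(g^{p\ov q}\eta_{i\ov q}\eta_{p\ov j})$ really do dominate the $T*D^3u$ terms arising in step (2), up to exactly the two leftover torsion terms, and then verifying that those two leftover terms cancel. This requires a careful bookkeeping of all the commutator/torsion terms — keeping track of $\partial T$, $\bar\partial T$, $T * \bar T$ and their contractions against $\tilde g^{i\ov j}$ and against $\partial u, \partial\bar\partial u$ — and an honest Cauchy--Schwarz allocation so that nothing is double-counted. A secondary but routine difficulty is that, unlike the Kähler case, even $\tilde L(\tr{g}{\tilde g})$ picks up first-order-in-$\tilde g$ terms from the non-closedness of $\omega$; these are lower order and absorbable, but they have to be carried through. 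Everything else — the choice of constants $A$ and $\tau$, the final arithmetic-geometric mean step using (\ref{es3}), and the reduction from the max-of-$Q$ bound to the global bound — is standard once the torsion terms are tamed.
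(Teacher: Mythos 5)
Your high-level strategy is the right one — a Hou--Ma--Wu-type maximum principle with an auxiliary term built from the ``square'' $g^{p\ov{q}}\eta_{i\ov{q}}\eta_{p\ov{j}}$, whose derivatives produce extra positive $D^3u$ terms to control torsion, plus a cancellation of two leftover torsion terms. But the proposal contains an error that would derail the computation: the linearized operator is not $\tilde{L}=\tilde{g}^{i\ov{j}}\partial_i\partial_{\ov{j}}$ but $L = \Theta^{i\ov{j}}\partial_i\partial_{\ov{j}}$ with $\Theta^{i\ov{j}}=\frac{1}{n-1}\big((\tr{\tilde{g}}{g})g^{i\ov{j}}-\tilde{g}^{i\ov{j}}\big)$. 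This matters because $\tilde{g}_{i\ov{j}}$ depends on $u$ through (\ref{t1}), so the linearization of $u\mapsto\log\det\tilde{g}$ in direction $v$ is $\Theta^{i\ov{j}}v_{i\ov{j}}$; the identity $\Theta^{i\ov{j}}u_{i\ov{j}}=\tr{\tilde{g}}{h}-n$ — which is what makes the barrier term in $u$ produce $+(C_1+1)\tr{\tilde{g}}{h}$ to dominate the lower-order $-C\tr{\tilde{g}}{g}$ garbage — and the concavity quadratic both come out $\Theta$-weighted, not $\tilde{g}$-weighted. Running the argument with $\tilde{g}^{i\ov{j}}\partial_i\partial_{\ov{j}}$ does not close.

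There are also several load-bearing details your sketch elides. (i) The auxiliary term in the paper is $c\log(g^{p\ov{q}}\eta_{i\ov{q}}\eta_{p\ov{j}}\xi^i\ov{\xi^j})$ on the unit tangent bundle, not an additive $\tau\,(\cdot)/\tr{g}{\tilde{g}}$; the logarithm and the smallness $c<1/(n-3)$ are needed to ensure the maximizing direction $\xi_0$ is $\partial/\partial z^1$ (largest eigenvalue of $\eta$) rather than the direction where $|\eta_{n\ov{n}}|$ is large and negative, which can happen for $n\ge 4$. (ii) The $\eta_{i\ov{j}}$ you wrote is $\tilde{g}_{i\ov{j}}$; the correct one is $\eta_{i\ov{j}}=(\tr{g}{\tilde{g}})g_{i\ov{j}}-(n-1)\tilde{g}_{i\ov{j}}$. (iii) Replacing $\psi(u)=-A\log(1+u/2L)$ by $-Au$ forfeits the $\psi''>0$ term, which is used together with $\varphi''$ to absorb the bad $-(1+2c)\sum\Theta^{i\ov{i}}|\eta_{1\ov{1}i}|^2/(\eta_{1\ov{1}})^2$ term in Case 2. (iv) The endgame requires the dichotomy $\lambda_2\le(1-\delta)\lambda_n$ versus $\lambda_2>(1-\delta)\lambda_n$ and, in the second case, the eigenvalue inequality $\lambda_n^{n-1}\le C\lambda_2\cdots\lambda_n\le C/\lambda_1$ from the equation; a direct AM-GM on $\tr{g}{\tilde{g}}$ does not suffice. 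Lastly, the ``fortunate cancellation'' is not an artifact of (\ref{metr}) per se: it follows from the antisymmetry $T_{1i\ov{1}}=-T_{i1\ov{1}}$ of the torsion together with $\Theta^{i\ov{i}}\approx\frac{1}{n-1}\tilde{g}^{1\ov{1}}$ for $i\ge 2$, once the torsion piece $\eta_{1\ov{1}}T_{i1\ov{1}}$ inside the concavity square $|\eta_{1\ov{1}i}+\mathcal{E}_i|^2$ is extracted.
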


Noting that $\tr{g}{\tilde{g}} = \tr{g}{h} + \Delta u$, we
  define, as in \cite{TW4},
\begin{equation} \label{defneta}
\eta_{i\ov{j}} =u_{i\ov{j}} +  (\tr{g}{h}) g_{i\ov{j}} - (n-1) h_{i\ov{j}}    = (\tr{g}{\tilde{g}})  g_{i\ov{j}} - (n-1) \tilde{g}_{i\ov{j}}.
\end{equation}
For $x\in M$ and $\xi$ a unit vector with respect to $g$, we define
 $H(x, \xi)$  by
$$H(x, \xi) = \log (\eta_{i\ov{j}} \xi^i \ov{\xi^j})+ c \log(g^{p\ov{q}}\eta_{i\ov{q}} \eta_{p\ov{j}}\xi^i\ov{\xi^j}) +   \varphi (|\nabla u|^2_g) + \psi(u),$$
where $\varphi, \psi$  are given by
\begin{equation}
\begin{split}
\varphi(s) = {} & - \frac{1}{2} \log \left( 1- \frac{s}{2K} \right), \quad \textrm{for } 0 \le s \le K-1, \\
\psi(t) = {} & - A \log \left( 1+ \frac{t}{2L} \right), \quad \textrm{for } -L +1 \le t \le 0,
\end{split}
\end{equation}
for
$$K: = \sup_M | \nabla u|^2_g+1,\ L: = \sup_M|u| +1, \ A: = 2L(C_1 +1)$$
and $C_1$ is   to be determined later.  The constant $c>0$ is a small constant, depending only on $n$, which will also be determined later.
 The quantities $\varphi$ and $\psi$ are identical to those in the computation of  Hou-Ma-Wu \cite{HMW}.

Recall that we have normalized $u$ so that $\sup_M u=0$. $L$ is uniformly bounded. The quantities $\varphi(|\nabla u|^2_g)$ and $\psi(u)$ are both uniformly bounded.  For later use note that,
as in \cite{HMW}, we have
\begin{equation} \label{hmwc1}
\begin{split}
&\frac{1}{2K} \ge  \varphi' \ge \frac{1}{4K} >0, \quad  \varphi'' =  2(\varphi')^2 >0
\end{split}
\end{equation}
and
\begin{equation} \label{psidp}
\begin{split}
&\frac{A}{L} \ge  - \psi' \ge \frac{A}{2L} = C_1+1, \quad \psi'' \ge  \frac{2\ve}{1-\ve} (\psi')^2, \quad \textrm{for all } \ve \le \frac{1}{2A+1},
\end{split}
\end{equation}
whenever $\vp$ and $\psi$ are evaluated at $|\nabla u|^2_g$ and $u$ respectively.

Note that the difference between $H$ here and the quantity considered in \cite{TW4} is that we have added a small multiple of the quantity $$\log (g^{p\ov{q}} \eta_{i\ov{q}} \eta_{p\ov{j}} \xi^i \ov{\xi^j}).$$  This is important in what follows, since after applying the linearized operator to our $H$ we obtain additional positive terms (see (\ref{LQ}), below) which
are needed to  bound torsion terms that did not arise in \cite{TW4}.

We restrict the function $H$ to the compact set $W$ in the $g$-unit tangent bundle of $M$ where $\eta_{i\ov{j}} \xi^i \ov{\xi^j} \geq 0$, defining $H=-\infty$ whenever
$\eta_{i\ov{j}} \xi^i \ov{\xi^j} =0$. Note that $\eta_{i\ov{j}} \xi^i \ov{\xi^j} >0$ implies $g^{p\ov{q}}\eta_{i\ov{q}} \eta_{p\ov{j}}\xi^i\ov{\xi^j} >0.$ This way, $H$ is 
upper semi-continuous on $W$ and hence $H$ achieves a maximum at some point $(x_0, \xi_0)$ where $\eta_{i\ov{j}}(x_0) \xi_0^i \ov{\xi_0^j} >0$.

Choose a holomorphic coordinate system $z^1, \ldots, z^n$ centered at $x_0$ such that at $x_0$,
$$g_{i\ov{j}} = \delta_{ij}, \quad \eta_{i\ov{j}} = \delta_{ij} \eta_{i\ov{i}}, \quad  \eta_{1\ov{1}} \ge \eta_{2\ov{2}} \ge \cdots \ge \eta_{n \ov{n}}.$$
From (\ref{defneta}), $(\tilde{g}_{i\ov{j}})$ is also diagonal at this point.  Writing $\lambda_i = \tilde{g}_{i\ov{i}}$ we have at $x_0$,
\begin{equation} \label{etalambda}
\eta_{i\ov{i}} = \sum_{j=1}^n \lambda_j - (n-1) \lambda_i,
\end{equation}
and in particular
$0<\lambda_1 \le \lambda_2 \le \cdots \le \lambda_n.$
The three quantities $\lambda_n$, $\eta_{1\ov{1}}$ and $\tr{\omega}{\tilde{\omega}}$ are uniformly equivalent:
\begin{equation} \label{ie}
\frac{1}{n} \tr{\omega}{\tilde{\omega}} \le \lambda_n \le \eta_{1\ov{1}} \le (n-1) \lambda_n \le (n-1) \tr{\omega}{\tilde{\omega}}.
\end{equation}
Observe that $\eta_{n\ov{n}}$ could be negative, and it is possible in general  that $ - \eta_{n\ov{n}} > \eta_{1\ov{1}}$. This only happens for $n\geq 4$, since it is easy to check that
when $n=3$ we have $\eta_{1\ov{1}}>-\eta_{3\ov{3}}$.  However, in any case we have
\begin{equation} \label{eb}
| \eta_{n\ov{n}}| \le (n-2) \eta_{1\ov{1}}.
\end{equation}
At $x_0$ we have
$$\log (\eta_{i\ov{j}} \xi^i \ov{\xi^j}) + c \log (g^{p\ov{q}}\eta_{i\ov{q}} \eta_{p\ov{j}}\xi^i\ov{\xi^j})  =  \log \left( \sum_i \eta_{i\ov{i}} | \xi^i|^2 \right) + c \log \left( \sum_i \eta_{i\ov{i}}^2|\xi^i|^2\right).$$
Using \eqref{eb}, the reader can check that if $c>0$ is chosen sufficiently small, depending only on $n$, then this quantity is maximized by $\xi_0=\de/\de z^1$. 
 In fact, one can take any positive $c$ with $c<1/(n-3)$ for $n>3$ and any $c>0$ for $n=3$.

We extend $\xi_0$ to a locally defined smooth unit vector field
\begin{equation} \label{xid}
\xi_0 = g_{1\ov{1}}^{-1/2} \frac{\partial}{\partial z^1}.
\end{equation}
Define a new quantity,  in a neighborhood of $x_0$, by
$$Q(x) = H(x, \xi_0) =\log (g_{1\ov{1}}^{-1} \eta_{1\ov{1}})+c \log(g_{1\ov{1}}^{-1}g^{i\ov{j}} \eta_{1\ov{j}}\eta_{i\ov{1}} )  + \varphi (|\nabla u|^2_g) + \psi(u),$$
which achieves a maximum at $x_0$.
Note that at $x_0$ we have
$$g_{1\ov{1}}^{-1}g^{i\ov{j}} \eta_{1\ov{j}}\eta_{i\ov{1}}=(\eta_{1\ov{1}})^2.$$
Our goal is to prove that, at $x_0$,
\begin{equation} \label{gol}
|\eta_{1\ov{1}}| \le CK,
\end{equation}
for a uniform constant $C$.  This will prove the theorem: at any given point $x\in M$, we choose coordinates are before, so that
 $\frac{1}{n}\tr{\omega}{\tilde{\omega}}(x)$ is bounded from above by $\eta_{1\ov{1}}(x)$. But this is in turn bounded
 above by 
 $$\sup_{\xi \in W}\left\{ (\eta_{k\ov{\ell}} \xi^k \ov{\xi^{\ell}})^{1/(1+2c)}   \left( g^{p\ov{q}}\eta_{i\ov{q}} \eta_{p\ov{j}}\xi^i\ov{\xi^j} \right)^{c/(1+2c)} \right\},$$ and so
 \[
 \begin{split}
 \sup_M \tr{\omega}{\tilde{\omega}} \le {} & n   \sup_{\xi \in W} \left\{ (\eta_{k\ov{\ell}} \xi^k \ov{\xi^{\ell}})^{1/(1+2c)}   \left( g^{p\ov{q}}\eta_{i\ov{q}} \eta_{p\ov{j}}\xi^i\ov{\xi^j} \right)^{c/(1+2c)} \right\} \\
 \le {} &
Ce^{Q(x_0)/(1+2c)} \le C'K= C'(\sup_M |\nabla u|^2_g + 1),
\end{split}
\]
as required.
We may and do assume, without loss of generality, that $|\eta_{1\ov{1}}|>>1$ at $x_0$.

Define a linear operator $L$ on functions by $L(v)= \Theta^{i\ov{j}} \partial_i \ov{\partial}_j v$ where
 $$\Theta^{i\ov{j}} = \frac{1}{n-1} ( (\tr{\tilde{g}}{g})g^{i\ov{j}} - \tilde{g}^{i\ov{j}} )>0.$$
At $x_0$, $\Theta^{i\ov{j}}$ is diagonal, and we have
\begin{equation} \label{sumT}
\sum_i \Theta^{i\ov{i}} = \tr{\tilde{g}}{g}.
\end{equation}
We will apply the maximum principle to $Q$ by computing $L(Q)$.
We use covariant derivatives with respect to the Hermitian metric  $g$, which we denote by subscripts.  First, at $x_0$, we have from (\ref{xid}), dropping the zero subscript in $\xi_0$, 
\begin{equation} \label{xieqn}
\xi^1_i + \ov{\xi^1_{\ov{i}}}=0.
\end{equation}
Using this, compute
\begin{equation} \label{Qi}
0 = Q_i =(1+2c) \frac{\eta_{1 \ov{1} i}}{\eta_{1\ov{1}}} + \varphi'  \left( \sum_p u_p u_{\ov{p}i} +  \sum_p u_{pi} u_{\ov{p}} \right) + \psi'  u_i.
\end{equation}
Next
\begin{equation}
\begin{split} \label{LQold}
L(Q) =&(1+2c)\sum_i\frac{\Theta^{i\ov{i}}   \eta_{1\ov{1} i\ov{i}}}{\eta_{1\ov{1}}}
 + c \sum_i\sum_{p\neq 1}\frac{\Theta^{i\ov{i}}  |\eta_{p\ov{1} i}|^2}{(\eta_{1\ov{1}})^2}+ c\sum_i\sum_{p\neq 1}\frac{\Theta^{i\ov{i}}  | \eta_{1\ov{p} i}|^2}{(\eta_{1\ov{1}})^2} \\
 &
 - (1+2c) \sum_{i} \frac{ \Theta^{i\ov{i}}|\eta_{1\ov{1} i}|^2}{(\eta_{1\ov{1}})^2}
+ (*) + \psi' \sum_i \Theta^{i\ov{i}} u_{i\ov{i}}
+ \psi'' \sum_i \Theta^{i\ov{i}} |u_i|^2 \\
&  + \varphi'' \sum_i \Theta^{i\ov{i}} \left| \sum_p u_p u_{\ov{p} i} + \sum_p u_{pi} u_{\ov{p}} \right|^2
+ \varphi' \sum_{i,p} \Theta^{i\ov{i}} \left( |u_{p \ov{i}}|^2  +  |u_{pi}|^2 \right) \\
&+ \varphi' \sum_{i,p} \Theta^{i\ov{i}} (u_{pi\ov{i}} u_{\ov{p}} + u_{\ov{p} i \ov{i}} u_p),
\end{split}
\end{equation}
where 
\begin{equation}
\begin{split}
(*) = {}& 2(1+c) \sum_{p \neq 1} \sum_i \frac{\Theta^{i\ov{i}}}{\eta_{1\ov{1}}}\textrm{Re} (\eta_{p\ov{1}i} \xi^p_{\ov{i}} + \eta_{1\ov{p}i} \ov{\xi^p_i}) \\
& + 2c \sum_{p\neq 1} \sum_i \frac{\Theta^{i\ov{i}}}{(\eta_{1\ov{1}})^2} \textrm{Re}(   \eta_{p\ov{1} i} \eta_{p\ov{p}} \xi^p_{\ov{i}} + \eta_{1\ov{p}i} \eta_{p\ov{p}} \ov{\xi^p_i}) \\
& + (1+c) \sum_i \Theta^{i\ov{i}} (\xi^1_{i\ov{i}} + \ov{\xi^1_{\ov{i} i}} ) + \sum_{p,i} \frac{\Theta^{i\ov{i}} \eta_{p\ov{p}}}{\eta_{1\ov{1}}} (| \xi^p_i|^2 + |\xi^p_{\ov{i}}|^2) \\
& + c \sum_{p,i} \frac{\Theta^{i\ov{i}} (\eta_{p\ov{p}})^2}{(\eta_{1\ov{1}})^2} (| \xi^p_i|^2 + |\xi^p_{\ov{i}}|^2).
\end{split}
\end{equation}
We have used again the equation (\ref{xieqn}), which is why the first two $p$ summations in $(*)$  do not include $p=1$.
Observe that the second and third terms on the first line of the expression for $L(Q)$ give new positive terms that did not appear in \cite{TW4}.  We will use half of these terms to bound $(*)$, and the other half later.  We obtain:
\begin{equation}
\begin{split} \label{LQ}
L(Q) \ge &(1+2c)\sum_i\frac{\Theta^{i\ov{i}}   \eta_{1\ov{1} i\ov{i}}}{\eta_{1\ov{1}}}
 + \frac{c}{2} \sum_i\sum_{p\neq 1}\frac{\Theta^{i\ov{i}}  |\eta_{p\ov{1} i}|^2}{(\eta_{1\ov{1}})^2}+ \frac{c}{2}\sum_i\sum_{p\neq 1}\frac{\Theta^{i\ov{i}}  | \eta_{1\ov{p} i}|^2}{(\eta_{1\ov{1}})^2} \\
 &
 - (1+2c) \sum_{i} \frac{ \Theta^{i\ov{i}}|\eta_{1\ov{1} i}|^2}{(\eta_{1\ov{1}})^2}
 + \psi' \sum_i \Theta^{i\ov{i}} u_{i\ov{i}}
+ \psi'' \sum_i \Theta^{i\ov{i}} |u_i|^2 \\
&  + \varphi'' \sum_i \Theta^{i\ov{i}} \left| \sum_p u_p u_{\ov{p} i} + \sum_p u_{pi} u_{\ov{p}} \right|^2
+ \varphi' \sum_{i,p} \Theta^{i\ov{i}} \left( |u_{p \ov{i}}|^2  +  |u_{pi}|^2 \right) \\
&+ \varphi' \sum_{i,p} \Theta^{i\ov{i}} (u_{pi\ov{i}} u_{\ov{p}} + u_{\ov{p} i \ov{i}} u_p) - C \tr{\tilde{g}}{g},
\end{split}
\end{equation}
for a uniform $C$.

Differentiating (\ref{es3}), we obtain
\begin{equation} \label{Fell}
\tilde{g}^{i\ov{j}} \nabla_{\ell} \tilde{g}_{i\ov{j}} = F_{\ell}, \quad \tilde{g}^{i\ov{j}} \nabla_{\ov{m}} \nabla_{\ell} \tilde{g}_{i\ov{j}} - \tilde{g}^{i\ov{q}} \tilde{g}^{p\ov{j}} \nabla_{\ov{m}} \tilde{g}_{p\ov{q}} \nabla_{\ell} \tilde{g}_{i\ov{j}} = F_{\ell \ov{m}}.
\end{equation}
In particular, we obtain at $x_0$
\begin{equation}\label{atx}
\begin{split}
\lefteqn{\sum_i\Theta^{i\ov{i}} u_{i \ov{i} 1 \ov{1}} + \sum_i \tilde{g}^{i\ov{i}} h_{i\ov{i} 1 \ov{1}}}\\ & - \frac{1}{(n-1)^2} \sum_{i,j}\tilde{g}^{i\ov{i}} \tilde{g}^{j\ov{j}} ( g_{j\ov{i}} \sum_a u_{a\ov{a} \ov{1}} - u_{j\ov{i} \ov{1}}  + \hat{h}_{j\ov{i}\ov{1}} ) (g_{i\ov{j}} \sum_b u_{b\ov{b} 1} - u_{i\ov{j}1} + \hat{h}_{i\ov{j} 1}) =F_{1 \ov{1}},
\end{split}
\end{equation}
where we are writing $\hat{h}_{i\ov{j}} = (n-1) h_{i\ov{j}}$.

We have the following commutation formulae:
\begin{equation}  \label{comm}
\begin{split}
u_{i\ov{j} \ell} = {} & u_{i\ell \ov{j}} - u_p R_{\ell \ov{j} i}^{\ \ \ \, p}, \ \,
u_{p\ov{j} \ov{m}} =  u_{p \ov{m} \ov{j}} - \ov{T^q_{mj}} u_{p\ov{q}}, \ \,
u_{i\ov{q} \ell} =    u_{\ell \ov{q} i} - T^p_{\ell i} u_{p\ov{q}} \\
u_{i\ov{j} \ell \ov{m}} = {} & u_{\ell \ov{m} i\ov{j}} + u_{p\ov{j}} R_{\ell \ov{m} i}^{\ \ \ \  p}   -
 u_{p\ov{m}} R_{i \ov{j} \ell}^{\ \ \ \, p}  - T^p_{\ell i} u_{p\ov{m} \ov{j}}
  - \ov{T_{mj}^q} u_{\ell\ov{q} i} - T^p_{i \ell} \ov{T^q_{mj}} u_{p\ov{q}}.
\end{split}
\end{equation}
Indeed, these follow from the standard formulae (using conventions of \cite{TW3}):
$$\nabla_i a_{\ell} = \partial_i a_{\ell} - \Gamma^p_{i\ell} a_p, \ \Gamma^k_{ij} = g^{k\ov{q}} \partial_i g_{j \ov{q}}, \ T^k_{ij} = \Gamma^k_{ij} - \Gamma^k_{ji}, \ R_{\ell \ov{m}i}^{\ \ \ \ p} = - \partial_{\ov{m}} \Gamma^p_{\ell i},$$
and
\begin{equation} \label{cf}
[\nabla_i, \nabla_{\ov{j}}] a_{\ell}= - R_{i \ov{j} \ell}^{\ \ \ \, p} a_p, \ [\nabla_i, \nabla_{\ov{j}}] a_{\ov{m}} = R_{i\ov{j} \ \, \ov{m}}^{\ \ \, \ov{q}} \ov{a_q}.
\end{equation}

Then from \eqref{atx} and (\ref{comm}),
\begin{equation}
\begin{split} \label{F112}
\lefteqn{\sum_i \Theta^{i\ov{i}} u_{ 1 \ov{1} i\ov{i} }+ \sum_i \tilde{g}^{i\ov{i}} h_{i\ov{i} 1 \ov{1}} + \sum_{i} \Theta^{i\ov{i}} \left(  u_{a\ov{i}} R_{1 \ov{1} i}^{ \ \ \ \, a} -  u_{a\ov{1}} R_{i \ov{i} 1}^{ \ \ \ a} \right) } \\
& + \sum_i \Theta^{i\ov{i}} \left( -  T^p_{1 i} u_{p\ov{1} \ov{i}} -  \ov{T^p_{1 i}} u_{1\ov{p}i} -  T^a_{i1} \ov{T^b_{1 i}} u_{a\ov{b}} \right) \\
&  - \frac{1}{(n-1)^2} \sum_{i,j} \tilde{g}^{i\ov{i}} \tilde{g}^{j\ov{j}} ( g_{j\ov{i}} \sum_a u_{a\ov{a} \ov{1}} - u_{j\ov{i} \ov{1}} + \hat{h}_{ji\ov{1}} ) (g_{i\ov{j}} \sum_b u_{b\ov{b} 1} - u_{i\ov{j}1} + \hat{h}_{i\ov{j} 1} ) =F_{1 \ov{1}}.
\end{split}
\end{equation}
From (\ref{defneta}),
\begin{equation} \label{ueta}
u_{1\ov{1} i\ov{i}} = \eta_{1\ov{1}i\ov{i}} + \hat{h}_{1 \ov{1} i\ov{i}} - (\tr{g}{h})_{i\ov{i}}.
\end{equation}

From  (\ref{LQ}), (\ref{F112}) and (\ref{ueta}) and using the fact that at $x_0$ we have $L(Q)\le 0$, we obtain
\begin{equation} \label{firstcombine}
\begin{split}
0 \ge {} &(1+2c)\sum_{i,j} \frac{ \tilde{g}^{i\ov{i}} \tilde{g}^{j\ov{j}} ( g_{j\ov{i}} \sum_a u_{a\ov{a} \ov{1}} - u_{j\ov{i} \ov{1}} + \hat{h}_{j\ov{i} \ov{1}} ) (g_{i\ov{j}} \sum_b u_{b\ov{b} 1} - u_{i\ov{j}1} + \hat{h}_{i\ov{j} 1} )}{{(n-1)^2 \eta_{1\ov{1}}}} \\
& +\frac{c}{2}\sum_i\sum_{p\neq 1}\frac{\Theta^{i\ov{i}}  |\eta_{p\ov{1} i}|^2}{(\eta_{1\ov{1}})^2}+ \frac{c}{2}\sum_i\sum_{p\neq 1}\frac{\Theta^{i\ov{i}}  | \eta_{1\ov{p} i}|^2}{(\eta_{1\ov{1}})^2}
 - (1+2c)\sum_{i} \frac{ \Theta^{i\ov{i}}|\eta_{1\ov{1} i}|^2}{(\eta_{1\ov{1}})^2}\\
& - C \tr{\tilde{g}}{g} + \frac{1+2c}{\eta_{1\ov{1}}} \left(    F_{1\ov{1}} - \sum_{i} \Theta^{i\ov{i}} \left(  u_{a\ov{i}} R_{1 \ov{1} i}^{ \ \ \ \, a} -   u_{a\ov{1}} R_{i \ov{i} 1}^{ \ \ \  a} \right)
  \right. \\
 & \left.  - \sum_i \tilde{g}^{i\ov{i}} h_{i\ov{i} 1 \ov{1}} - \sum_i \Theta^{i\ov{i}}\left( \hat{h}_{1 \ov{1} i \ov{i} } - (\tr{g}{h})_{i\ov{i}}  \right) \right) \\
&  +  \frac{1+2c}{\eta_{1\ov{1}}} \left( \sum_{i} \Theta^{i\ov{i}} \left(    T^p_{1 i} u_{p\ov{1} \ov{i}} + \ov{T^p_{1 i}} u_{1\ov{p}i} +   T^a_{i1} \ov{T^b_{1 i}} u_{a\ov{b}} \right)  \right) \\
& + \psi' \sum_i \Theta^{i\ov{i}} u_{i\ov{i}}
+ \psi'' \sum_i \Theta^{i\ov{i}} |u_i|^2 + \varphi'' \sum_i \Theta^{i\ov{i}} \left| \sum_p u_p u_{\ov{p} i} + \sum_p u_{pi} u_{\ov{p}} \right|^2 \\
&+ \varphi' \sum_{i,p} \Theta^{i\ov{i}} \left( |u_{p \ov{i}}|^2  +  |u_{pi}|^2 \right) + \varphi' \sum_{i,p} \Theta^{i\ov{i}} (u_{pi\ov{i}} u_{\ov{p}} + u_{\ov{p} i \ov{i}} u_p). \\
=:{} & (1) + (2) + (3) + (4) + (5) + (6) + (7),
 \end{split}
\end{equation}
where the numbers (1) - (7) refer to the lines in the expression above.

We now make the observation that we may and do assume that, at the point $x_0$,
\begin{equation} \label{apprbds2}
\begin{split}
& | u_{i\ov{j}}| \le  \ 2|\eta_{1\ov{1}}|,\quad \textrm{for all } i,j.
\end{split}
\end{equation}
Indeed, since
 our goal is to prove (\ref{gol}) we may assume without loss of generality that $|\eta_{1 \ov{1}}|$ is large. Then (\ref{apprbds2}) follows   immediately from \eqref{defneta}.

We will now deal with each line of (\ref{firstcombine}) in turn, starting with the easiest.

\bigskip
\noindent
{\bf Lines (3) and (4) of (\ref{firstcombine}).} \ From  (\ref{sumT}) and \eqref{apprbds2}, we immediately have a lower bound for the third and fourth lines of (\ref{firstcombine}):
\begin{equation} \label{4linelb}
\begin{split}
(3)+(4) \ge {}  & - C \tr{\tilde{g}}{g} -C.
\end{split}
\end{equation}

\bigskip
\noindent
{\bf Line (6) of (\ref{firstcombine}).} \ The second and third terms in this line are nonnegative (since $\psi'', \varphi''>0$) and we will make use of them later.  Note that by taking trace with respect to $\tilde{g}$ of (\ref{t1}) we obtain
\begin{equation} \label{Thetauii}
\psi' \sum_i \Theta^{i\ov{i}} u_{i\ov{i}}  = (-\psi') ( \tr{\tilde{g}}{h}-n).
\end{equation}
On the other hand recall that from (\ref{psidp}),
$$\frac{A}{L} \ge  - \psi' \ge \frac{A}{2L} = C_1+1,$$
where $C_1$ is still to be determined.   So we have
\begin{equation}
\begin{split}
\psi' \sum_i \Theta^{i\ov{i}} u_{i\ov{i}}
\ge {} & -2n(C_1+1) + (C_1+1) \tr{\tilde{g}}{h}
\end{split}
\end{equation}
and hence
\begin{equation}
\begin{split}
(6) \ge {} &  -2n(C_1+1) + (C_1+1) \tr{\tilde{g}}{h}
 + \psi'' \sum_i \Theta^{i\ov{i}} |u_i|^2
 \\ & + \varphi'' \sum_i \Theta^{i\ov{i}} \left| \sum_p u_p u_{\ov{p} i} + \sum_p u_{pi} u_{\ov{p}} \right|^2.
\end{split}
\end{equation}

\bigskip
\noindent
{\bf Line (7) of (\ref{firstcombine}).} \ The first  term of line (7) is nonnegative since $\varphi'>0$.   For the second term we argue as follows.
From (\ref{Fell}), we have at $x_0$,
$$\sum_i \Theta^{i\ov{i}} u_{i\ov{i} p} = F_{p}- \sum_i \tilde{g}^{i\ov{i}} h_{i\ov{i}p}.$$
Using (\ref{comm}) we have at $x_0$,
\begin{equation}
\begin{split}
\lefteqn{ \varphi' \sum_{i,p} \Theta^{i\ov{i}} (u_{pi\ov{i}} u_{\ov{p}} + u_{\ov{p} i\ov{i}} u_p )  } \\={} & \varphi' \sum_p ( F_p u_{\ov{p}} + F_{\ov{p}} u_p)
 + \varphi' \sum_{i,p,q} \Theta^{i\ov{i}} u_q u_{\ov{p}} R_{i\ov{i} p}^{\ \ \ q} \\ & - \varphi' \sum_{i,p} \tilde{g}^{i\ov{i}} (h_{i\ov{i}p} u_{\ov{p}} + h_{i\ov{i} \ov{p}} u_p )
 + 2\varphi' \mathrm{Re}\sum_{i,p,q} \Theta^{i\ov{i}} u_{\ov{p}} u_{q\ov{i}} T^q_{pi}.
 \end{split}
\end{equation}
Making use of (\ref{hmwc1}) we have the estimate
\begin{equation} \label{phiplb}
\varphi' \sum_{i,p} \Theta^{i\ov{i}} (u_{pi\ov{i}} u_{\ov{p}} + u_{\ov{p} i\ov{i}} u_p )   \ge - C - C \tr{\tilde{g}}{g} - \frac{1}{10} \varphi' \sum_{i,p} \Theta^{i\ov{i}}(|u_{pi}|^2+ |u_{p\ov{i}}|^2).
\end{equation}
Hence
\begin{equation}\label{l7b}
(7) \ge \frac{9}{10} \varphi' \sum_{i,p} \Theta^{i\ov{i}} (|u_{p\ov{i}}|^2 + |u_{pi}|^2) - C - C \tr{\tilde{g}}{g}.
\end{equation}

\bigskip
\noindent
{\bf Line (5) of (\ref{firstcombine}).} \ Here is where torsion terms appear that need to be controlled using the first and second terms of line (2) of (\ref{firstcombine}).
We first bound
\[
\begin{split}
\frac{1+2c}{\eta_{1\ov{1}}} \sum_{i,p} \Theta^{i\ov{i}} \left(  T^p_{1 i} u_{p\ov{1} \ov{i}} + \ov{T^p_{1 i}} u_{1\ov{p}i} \right) = \frac{2(1+2c)}{\eta_{1\ov{1}}} \sum_{i,p} \Theta^{i\ov{i}} \textrm{Re} \left( \ov{T^p_{1 i}} u_{1\ov{p}i} \right).
\end{split}
\]
We have
$$u_{1\ov{p}i} = \eta_{1\ov{p} i} - (\tr{g}{h})_i g_{1 \ov{p}} + \hat{h}_{1 \ov{p} i}.$$
Hence
\begin{equation} \label{line51}
\begin{split}
\lefteqn{
\frac{2(1+2c)}{\eta_{1\ov{1}}} \sum_{i,p} \Theta^{i\ov{i}} \textrm{Re} \left( \ov{T^p_{1 i}} u_{1\ov{p}i} \right) }  \\
\ge {} & \frac{2(1+2c)}{\eta_{1\ov{1}}} \sum_{i,p} \Theta^{i\ov{i}} \textrm{Re} \left( \ov{T^p_{1 i}} \eta_{1 \ov{p}i} \right) - C \tr{\tilde{g}}{g}.
\end{split}
\end{equation}
Notice that if we consider only the summands $p \neq 1$, we have
\begin{equation} \label{line52}
\frac{2(1+2c)}{\eta_{1\ov{1}}} \sum_{i} \sum_{p\neq 1} \Theta^{i\ov{i}} \textrm{Re} \left( \ov{T^p_{1 i}} \eta_{1\ov{p}i} \right) \ge - \frac{c}{4} \sum_i \sum_{p\neq 1} \Theta^{i\ov{i}} \frac{| \eta_{1\ov{p}i}|^2}{(\eta_{1\ov{1}})^2} - C \tr{\tilde{g}}{g},
\end{equation}
and observe that the first term on the right hand side of this inequality can be controlled by the second term on line (2) of (\ref{firstcombine}).

The term when $p=1$ can be written as
\begin{equation}\label{line53}
\frac{2(1+2c)}{\eta_{1\ov{1}}} \sum_{i}  \Theta^{i\ov{i}} \textrm{Re} \left( \ov{T^1_{1 i}} \eta_{1\ov{1}i} \right)  = \frac{2(1+2c)}{\eta_{1\ov{1}}} \sum_{i\neq1}  \Theta^{i\ov{i}} \textrm{Re} \left( \ov{T^1_{1 i}} \eta_{1\ov{1}i} \right),
\end{equation}
using the skew-symmetry of torsion.

The third term of line (5) of (\ref{firstcombine}) can be easily bounded by
\begin{equation}\label{line54}
\left|\frac{1+2c}{\eta_{1\ov{1}}}  \sum_{i} \Theta^{i\ov{i}}  T^a_{i1} \ov{T^b_{1 i}} u_{a\ov{b}} \right|\leq C \tr{\tilde{g}}{g}.
\end{equation}
Combining (\ref{line51}), (\ref{line52}), (\ref{line53}) and \eqref{line54}, we obtain
\begin{equation} \label{mystery1}
\begin{split}
(5) \ge  {} &\frac{2(1+2c)}{\eta_{1\ov{1}}} \sum_{i\neq1} \Theta^{i\ov{i}} \textrm{Re} \left( \ov{T^1_{1 i}} \eta_{1\ov{1}i} \right) - \frac{c}{4} \sum_i \sum_{p\neq 1} \Theta^{i\ov{i}} \frac{| \eta_{1\ov{p}i}|^2}{(\eta_{1\ov{1}})^2}  - C \tr{\tilde{g}}{g}.
\end{split}
\end{equation}

For the moment we leave lines $(1)$ and $(2)$ of \eqref{firstcombine} as they are, and combining all the above lower bounds for $(1),\dots,(7)$, we obtain
\begin{equation}\label{secondcombine}
\begin{split}
0\geq {} &(1+2c)\sum_{i,j} \frac{ \tilde{g}^{i\ov{i}} \tilde{g}^{j\ov{j}} ( g_{j\ov{i}} \sum_a u_{a\ov{a} \ov{1}} - u_{j\ov{i} \ov{1}} + \hat{h}_{j\ov{i} \ov{1}}) (g_{i\ov{j}} \sum_b u_{b\ov{b} 1} - u_{i\ov{j}1} + \hat{h}_{i\ov{j}1} )}{{(n-1)^2 \eta_{1\ov{1}}}} \\
& + \frac{c}{2} \sum_i\sum_{p\neq 1}\frac{\Theta^{i\ov{i}}  |\eta_{p\ov{1} i}|^2}{(\eta_{1\ov{1}})^2}+ \frac{c}{4}\sum_i\sum_{p\neq 1}\frac{\Theta^{i\ov{i}}  | \eta_{1\ov{p} i}|^2}{(\eta_{1\ov{1}})^2}
 - (1+2c)\sum_{i} \frac{ \Theta^{i\ov{i}}|\eta_{1\ov{1} i}|^2}{(\eta_{1\ov{1}})^2}\\
& + \psi'' \sum_i \Theta^{i\ov{i}} |u_i|^2+ \varphi'' \sum_i \Theta^{i\ov{i}} \left| \sum_p u_p u_{\ov{p} i} + \sum_p u_{pi} u_{\ov{p}} \right|^2\\
& +  \frac{9}{10} \varphi' \sum_{i,p} \Theta^{i\ov{i}} ( |u_{pi}|^2 + |u_{p \ov{i}}|^2)- C_0 \tr{\tilde{g}}{g}-C\\
& {}  +\frac{2(1+2c)}{\eta_{1\ov{1}}} \sum_{i\neq1}  \Theta^{i\ov{i}} \textrm{Re} \left( \ov{T^1_{1 i}} \eta_{1\ov{1}i} \right)
   + (C_1+1) \tr{\tilde{g}}{h},
\end{split}
\end{equation}
where $C_0$ is a uniform constant (independent of the value of $C_1$).
Let $C_2$ be a uniform constant such that
\begin{equation}\label{torsionbound}
\sup_M|T|^2_g\leq C_2.
\end{equation}
We now pick $C_1$ sufficiently large (and uniformly bounded) so that
$$(C_1 +1) \tr{\tilde{g}}{h} \ge (C_0 +2) \tr{\tilde{g}}{g}$$
 and define
$$\delta=\min\left\{\frac{1}{1+4L(C_1+1)},\frac{1}{16(1+2c)^2C_2}\right\},$$
which is a small uniform constant.

We first consider:

\bigskip
\noindent
{\bf Case 1:  $\lambda_2 \leq (1-\delta) \lambda_n$.}

\bigskip
In this case, we simply throw away several nonnegative terms in \eqref{secondcombine} and get
\begin{equation}\label{secondcombine2}
\begin{split}
0\geq {} & - (1+2c)\sum_{i} \frac{ \Theta^{i\ov{i}}|\eta_{1\ov{1} i}|^2}{(\eta_{1\ov{1}})^2}+ \varphi'' \sum_i \Theta^{i\ov{i}} \left| \sum_p u_p u_{\ov{p} i} + \sum_p u_{pi} u_{\ov{p}} \right|^2- C_0 \tr{\tilde{g}}{g}-C\\
& {}  +\frac{2(1+2c)}{\eta_{1\ov{1}}} \sum_{i\neq1}  \Theta^{i\ov{i}} \textrm{Re} \left( \ov{T^1_{1 i}} \eta_{1\ov{1}i} \right)
 +\frac{9}{10} \varphi' \sum_{i,p} \Theta^{i\ov{i}} u_{i \ov{i}}^2 + (C_1+1) \tr{\tilde{g}}{h}.
\end{split}
\end{equation}
We then bound
\[\begin{split}
\left|\frac{2(1+2c)}{\eta_{1\ov{1}}} \sum_{i\neq1}  \Theta^{i\ov{i}} \textrm{Re} \left( \ov{T^1_{1 i}} \eta_{1\ov{1}i} \right)\right|
&\leq 2c\sum_i \Theta^{i\ov{i}} \frac{| \eta_{1\ov{1}i}|^2}{(\eta_{1\ov{1}})^2}+C\tr{\tilde{g}}{g}.
\end{split}\]
Hence
\begin{equation}\label{final}
\begin{split}
0\geq {} & - (1+2c)^2\sum_{i} \frac{ \Theta^{i\ov{i}}|\eta_{1\ov{1} i}|^2}{(\eta_{1\ov{1}})^2}+ \varphi'' \sum_i \Theta^{i\ov{i}} \left| \sum_p u_p u_{\ov{p} i} + \sum_p u_{pi} u_{\ov{p}} \right|^2\\
& +\frac{9}{10} \varphi' \sum_{i,p} \Theta^{i\ov{i}} u_{i \ov{i}}^2 -C \tr{\tilde{g}}{g}-C.
\end{split}
\end{equation}
From (\ref{Qi}) and (\ref{sumT}) we have
\begin{equation}
\begin{split} \label{ft}
- (1+2c)^2\sum_i \frac{\Theta^{i\ov{i}} | \eta_{1\ov{1}i}|^2}{(\eta_{1\ov{1}})^2} =  {} & - \sum_i \Theta^{i\ov{i}} \left| \varphi'  \left( \sum_p u_p u_{\ov{p}i} +  \sum_p u_{pi} u_{\ov{p}} \right) + \psi'  u_i \right|^2 \\
\ge {} &  - 2 (\varphi')^2 \sum_i \Theta^{i\ov{i}} \left| \sum_p u_p u_{\ov{p}i} +  \sum_p u_{pi} u_{\ov{p}} \right|^2 \\
&- 8 (C_1+1)^2 K\tr{\tilde{g}}{g},
\end{split}
\end{equation}
where we have used $| \psi'| \le A/L = 2(C_1+1)$.  Then, using (\ref{hmwc1}) and (\ref{psidp}), we have from \eqref{final},
\begin{equation}
\begin{split}
0 \ge {} & \frac{1}{8K} \sum_i \Theta^{i\ov{i}} u_{i\ov{i}}^2 - CK \tr{\tilde{g}}{g} -C\tr{\tilde{g}}{g},
\end{split}
\end{equation}
giving
\begin{equation}
\begin{split}
 \frac{1}{8K} \Theta^{n\ov{n}} u_{n\ov{n}}^2 \le CK \tr{\tilde{g}}{g}.
\end{split}
\end{equation}

From  the definition of $\Theta^{i\ov{i}}$, we have $\Theta^{n\ov{n}} \ge \frac{1}{n} \sum_i \Theta^{i\ov{i}} = \frac{1}{n} \tr{\tilde{g}}{g}$ and so
\begin{equation} \label{goodone}
\frac{1}{n} (\tr{\tilde{g}}{g}) u_{n\ov{n}}^2 \le C K^2 \tr{\tilde{g}}{g}.
\end{equation}
But the assumption $\lambda_2 \le (1-\delta) \lambda_n$ together with (\ref{defneta}), implies that
\[
\begin{split}
u_{n\ov{n}} \le {} & \sum_{i=1}^n \lambda_i - (n-1) \lambda_n+C \\
\le {} & \lambda_1+\lambda_2+(n-2)\lambda_n - (n-1) \lambda_n+C \\
\le {} & \lambda_1 - \delta \lambda_n+C
\le  - \frac{\delta}{2} \lambda_n ,
\end{split}
\]
where we use the following: since $\lambda_1 \le \cdots \le \lambda_n$ and the equation gives us $\lambda_1 \cdots \lambda_n = e^{\tilde{F}}$ (recall that $\tilde{F}=F+b$ is bounded), we may assume without loss of generality that $\lambda_1 << 1$, say (so in particular, $\tr{\tilde{g}}{g}\geq 1$). Hence we have $\delta^2 \lambda_n^2\leq 4u_{n\ov{n}}^2$
which from (\ref{goodone}) gives the uniform bound
$$\lambda_n \le C K.$$
By (\ref{ie}), this implies the estimate $\eta_{1\ov{1}} \le C'K$.
This completes the proof of Theorem \ref{theoremC2} in Case 1.

\bigskip
\noindent
{\bf Case 2:  $\lambda_2 \ge (1-\delta) \lambda_n$.}

\bigskip
From \eqref{defneta} we immediately get
\begin{equation} \label{apprbds}
\begin{split}
& | u_{i\ov{j}}| \le  C \quad \textrm{for } i \neq j, \\
& |u_{1\ov{1}}-\eta_{1\ov{1}}|\leq C.
\end{split}
\end{equation}

At this point we throw away two nonnegative terms in \eqref{secondcombine} to obtain
\begin{equation}\label{secondcombine3}
\begin{split}
0\geq {} &(1+2c)\sum_{i,j} \frac{ \tilde{g}^{i\ov{i}} \tilde{g}^{j\ov{j}} ( g_{j\ov{i}} \sum_a u_{a\ov{a} \ov{1}} - u_{j\ov{i} \ov{1}} + \hat{h}_{j\ov{i} \ov{1}} ) (g_{i\ov{j}} \sum_b u_{b\ov{b} 1} - u_{i\ov{j}1}  + \hat{h}_{i\ov{j}1} )}{{(n-1)^2 \eta_{1\ov{1}}}} \\
& {}  - (1+2c)\sum_{i} \frac{ \Theta^{i\ov{i}}|\eta_{1\ov{1} i}|^2}{(\eta_{1\ov{1}})^2}
+ \psi'' \sum_i \Theta^{i\ov{i}} |u_i|^2 \\
& {} + \varphi'' \sum_i \Theta^{i\ov{i}} \left| \sum_p u_p u_{\ov{p} i} + \sum_p u_{pi} u_{\ov{p}} \right|^2
 +  \frac{9}{10} \varphi' \sum_{i,p} \Theta^{i\ov{i}} ( |u_{pi}|^2 + |u_{p \ov{i}}|^2) \\
 & {} - C_0 \tr{\tilde{g}}{g}-C
   +\frac{2(1+2c)}{\eta_{1\ov{1}}} \sum_{i=2}^n  \Theta^{i\ov{i}} \textrm{Re} \left( \ov{T^1_{1i}} \eta_{1\ov{1}i} \right)
+ (C_1+1) \tr{\tilde{g}}{h},
\end{split}
\end{equation}

We wish to deal with the bad term $- (1+2c)\sum_{i} \frac{ \Theta^{i\ov{i}}|\eta_{1\ov{1} i}|^2}{(\eta_{1\ov{1}})^2}$ in \eqref{secondcombine3}. We first consider the summand when the index $i$ is equal to $1$.
Compute, using (\ref{Qi}),
\begin{equation} \label{Line21}
\begin{split}
\lefteqn{- (1+2c)^2 \frac{\Theta^{1\ov{1}} | \eta_{1\ov{1}1}|^2}{(\eta_{1\ov{1}})^2}} \\ = {} & - \Theta^{1\ov{1}} \left| \varphi' \left(\sum_p u_p u_{\ov{p}1} + \sum_p u_{p1} u_{\ov{p}} \right) + \psi' u_1 \right|^2 \\
\ge {} & - 2 \Theta^{1\ov{1}} (\varphi')^2 \left| \sum_p u_p u_{\ov{p}1} + \sum_p u_{p1} u_{\ov{p}} \right|^2 - 2 (\psi')^2 K \Theta^{1\ov{1}} \\
\ge  {} & - \varphi'' \Theta^{1\ov{1}} \left| \sum_p u_p u_{\ov{p}1} + \sum_p u_{p1} u_{\ov{p}} \right|^2 - C,
\end{split}
\end{equation}
where we have used that (\ref{hmwc1}) and (\ref{psidp}),  that $\Theta^{1\ov{1}}\approx (\eta_{1\ov{1}})^{-1}$.  Recall that we may assume without loss of generality that $\eta_{1\ov{1}} >> K$.

Next we exploit again the good first term on the third line of (\ref{secondcombine3}) together with the as-yet unused good second term on the same line to kill a ``small part'' of the bad term
$$-(1+2c) \sum_{i=2}^n \frac{\Theta^{i\ov{i}}|\eta_{1\ov{1}i}|^2}{(\eta_{1\ov{1}})^2}.$$
First, observe that applying again (\ref{Qi}) and the fact that $\varphi'' = 2(\varphi')^2$,
\begin{equation} \label{uno1}
\begin{split}
\lefteqn{\varphi'' \sum_{i =2}^n \Theta^{i\ov{i}} \left| \sum_p u_p u_{\ov{p} i} + \sum_p u_{pi} u_{\ov{p}} \right|^2 } \\= {} & 2 \sum_{i =2}^n \Theta^{i\ov{i}} \left| (1+2c)\frac{\eta_{1\ov{1} i}}{\eta_{1\ov{1}}} + \psi' u_i \right|^2 \\
\ge {} & 2(1+2c)^2 \delta \sum_{i = 2}^n \frac{\Theta^{i\ov{i}} |\eta_{1\ov{1}i}|^2}{(\eta_{1\ov{1}})^2} - \frac{2\delta (\psi')^2}{1-\delta} \sum_{i = 2}^n \Theta^{i\ov{i}} |u_i|^2,
\end{split}
\end{equation}
using \cite[Proposition 2.3]{HMW}.  Recall that  $\delta \leq \frac{1}{1+2A}$ and so from (\ref{psidp}),
\begin{equation} \label{dos}
 \frac{2\delta (\psi')^2}{1-\delta} \sum_{i = 2}^n \Theta^{i\ov{i}} |u_i|^2 \le \psi'' \sum_{i = 2}^n \Theta^{i\ov{i}} |u_i|^2 \le \psi'' \sum_{i=1}^n \Theta^{i\ov{i}} |u_i|^2.
 \end{equation}
Putting this together gives
\begin{equation} \label{uno}
\begin{split}
\lefteqn{\varphi'' \sum_{i =2}^n \Theta^{i\ov{i}} \left| \sum_p u_p u_{\ov{p} i} + \sum_p u_{pi} u_{\ov{p}} \right|^2 } \\ &  \ge 2(1+2c)^2 \delta \sum_{i = 2}^n \frac{\Theta^{i\ov{i}} |\eta_{1\ov{1}i}|^2}{(\eta_{1\ov{1}})^2} - \psi'' \sum_{i=1}^n \Theta^{i\ov{i}} |u_i|^2.
\end{split}
\end{equation}
Combining \eqref{Line21} and \eqref{uno} we get
\begin{equation} \label{due}
\begin{split}
 - (1+2c)\sum_{i} \frac{ \Theta^{i\ov{i}}|\eta_{1\ov{1} i}|^2}{(\eta_{1\ov{1}})^2}
+ \psi'' \sum_i \Theta^{i\ov{i}} |u_i|^2+ \varphi'' \sum_i \Theta^{i\ov{i}} \left| \sum_p u_p u_{\ov{p} i} + \sum_p u_{pi} u_{\ov{p}} \right|^2\\
\geq- (1+2c-2(1+2c)^2\delta)\sum_{i=2}^n \frac{ \Theta^{i\ov{i}}|\eta_{1\ov{1} i}|^2}{(\eta_{1\ov{1}})^2} -C.
\end{split}
\end{equation}
We will now make use of the good first line  of \eqref{secondcombine3}.  In fact we will only need a part of this term, namely the summands with $j=1$ and $i=2, \ldots, n$, which equal
$$\frac{1+2c}{(n-1)^2 \eta_{1\ov{1}}} \sum_{i=2}^n \tilde{g}^{i\ov{i}} \tilde{g}^{1\ov{1}} | u_{i\ov{1}1} - \hat{h}_{i\ov{1} 1}|^2.$$
We have from (\ref{comm}),
$$u_{i\ov{1}1} - \hat{h}_{i\ov{1}1}  = \eta_{1\ov{1}i} +\mathcal{E}_i$$
where
\begin{equation} \label{Eifor}
\mathcal{E}_i =  - \sum_p T^p_{1i} u_{p\ov{1}}  -\hat{h}_{i\ov{1}1} - (\tr{g}{h})_i + \hat{h}_{1\ov{1}i}.
\end{equation}
Hence
\begin{equation} \label{distribute}
\begin{split}
\lefteqn{\frac{1+2c}{(n-1)^2 \eta_{1\ov{1}}} \sum_{i=2}^n \tilde{g}^{i\ov{i}} \tilde{g}^{1\ov{1}} | u_{i\ov{1} 1} - \hat{h}_{i\ov{1} 1} |^2 } \\= {} & \frac{1+2c}{(n-1)^2 \eta_{1\ov{1}}}\sum_{i=2}^n  \tilde{g}^{i\ov{i}} \tilde{g}^{1\ov{1}} | \eta_{1\ov{1}i} + \mathcal{E}_i|^2 \\
={} &  \frac{1+2c}{(n-1)^2\eta_{1\ov{1}}}\sum_{i=2}^n  \tilde{g}^{i\ov{i}} \tilde{g}^{1\ov{1}} | \eta_{1\ov{1}i} |^2+  \frac{1+2c}{(n-1)^2\eta_{1\ov{1}}}\sum_{i=2}^n  \tilde{g}^{i\ov{i}} \tilde{g}^{1\ov{1}} |\mathcal{E}_i|^2\\
& +  \frac{2(1+2c)}{(n-1)^2\eta_{1\ov{1}}} \textrm{Re} \left\{ \sum_{i=2}^n  \tilde{g}^{i\ov{i}} \tilde{g}^{1\ov{1}} \eta_{1\ov{1}i} \ov{\mathcal{E}_i} \right\}.
\end{split}
\end{equation}
We claim that
\begin{equation} \label{niceclaim}
\frac{1+2c}{(n-1)^2 \eta_{1\ov{1}}} \sum_{i=2}^n \tilde{g}^{i\ov{i}} \tilde{g}^{1\ov{1}} |\eta_{1\ov{1}i}|^2 \ge {}  (1+2c- (1+2c)^2\delta) \sum_{i=2}^n \frac{\Theta^{i\ov{i}} |\eta_{1\ov{1}i}|^2}{(\eta_{1\ov{1}})^2}.
\end{equation}
Indeed this follows from the stronger inequality
\begin{equation} \label{stronger1}
\left(1- (1+2c)\delta\right) \frac{\Theta^{i\ov{i}}}{\eta_{1\ov{1}}} \le \frac{1}{(n-1)^2} \tilde{g}^{i\ov{i}} \tilde{g}^{1\ov{1}}, \quad \textrm{for } i=2, \ldots, n,
\end{equation}
which was proved in \cite[(4.41)]{TW4} with $3\delta/2$ instead of $(1+2c) \delta$.  The argument is exactly the same and so we will not repeat it here.

The next term that we need to bound is the cross term of (\ref{distribute}):
$$\frac{2(1+2c)}{(n-1)^2\eta_{1\ov{1}}} \textrm{Re} \left\{ \sum_{i=2}^n  \tilde{g}^{i\ov{i}} \tilde{g}^{1\ov{1}} \eta_{1\ov{1}i} \ov{\mathcal{E}_i} \right\}. $$
From (\ref{Eifor}), we have for $i=2, \ldots, n$,
\begin{equation} \label{almostlastclaim}
\mathcal{E}_i = \eta_{1\ov{1}} T_{i1\ov{1}}  + \tilde{\mathcal{E}}_i,
\end{equation}
where
$$\tilde{\mathcal{E}}_i = O(1),$$
thanks to \eqref{apprbds}.  Here we are writing $T_{ij\ov{\ell}}=g_{k\ov{\ell}} T^k_{ij}$.

Now using the facts that for $i=2, \ldots, n$, the quantity $\tilde{g}^{i\ov{i}}$ is comparable to $(\eta_{1\ov{1}})^{-1}$ and $\tilde{g}^{1\ov{1}}$ is comparable to $\Theta^{i\ov{i}}$, we have
\[\begin{split}
\lefteqn{\frac{2(1+2c)}{(n-1)^2 \eta_{1\ov{1}}} \textrm{Re} \left\{ \sum_{i=2}^n \tilde{g}^{i\ov{i}} \tilde{g}^{1\ov{1}} \eta_{1\ov{1} i} \ov{\tilde{\mathcal{E}}_i} \right\}}\\
\ge {} & - \frac{C}{(\eta_{1\ov{1}})^2} \sum_{i=2}^n \tilde{g}^{1\ov{1}} |\eta_{1\ov{1}i}|  \\
\ge {} & -  \frac{\delta}{4} \sum_{i=2}^n \Theta^{i\ov{i}} \frac{| \eta_{1\ov{1}i}|^2}{(\eta_{1\ov{1}})^2} - \frac{C'}{\delta (\eta_{1\ov{1}})^2} \tr{\tilde{g}}{g}.
\end{split}\]
Without loss of generality, we may assume that $(\eta_{1\ov{1}})^2\geq 2C'/\delta$, so that
\begin{equation}\label{Line13}
\frac{2(1+2c)}{(n-1)^2 \eta_{1\ov{1}}} \textrm{Re} \left\{ \sum_{i=2}^n \tilde{g}^{i\ov{i}} \tilde{g}^{1\ov{1}} \eta_{1\ov{1} i} \ov{\tilde{\mathcal{E}}_i} \right\}\geq
-  \frac{\delta}{4} \sum_{i=2}^n \Theta^{i\ov{i}} \frac{| \eta_{1\ov{1}i}|^2}{(\eta_{1\ov{1}})^2} - \frac{1}{2} \tr{\tilde{g}}{g}.
\end{equation}
Next we make the observation that, assuming without loss of generality $\lambda_n>>\lambda_1$, we have
 for $i=2, \ldots, n$,
\begin{equation}\label{obs}
\left| \tilde{g}^{i\ov{i}} - \frac{(n-1)}{\eta_{1\ov{1}}} \right| \le \frac{2(n-1)\delta}{\eta_{1\ov{1}}}, \quad \textrm{for $i=2, \ldots, n$.}
\end{equation}

Using \eqref{torsionbound}, \eqref{obs}, and the fact that $\ti{g}^{1\ov{1}}\leq (n-1)\Theta^{i\ov{i}}$ for $i\geq 2$, we obtain
\begin{equation} \label{Line14}
\begin{split}
\lefteqn{\frac{2(1+2c)}{(n-1)^2 \eta_{1\ov{1}}} \textrm{Re} \left\{ \sum_{i=2}^n \tilde{g}^{i\ov{i}} \tilde{g}^{1\ov{1}} \eta_{1\ov{1} i} \eta_{1\ov{1}} \ov{T_{i1\ov{1}}} \right\} }\\
\ge {} & \frac{2(1+2c)\tilde{g}^{1\ov{1}}}{(n-1) \eta_{1\ov{1}}} \textrm{Re} \left\{ \sum_{i=2}^n  \eta_{1\ov{1} i}  \ov{T_{i1\ov{1}}}  \right\} \\
&  - \frac{4(1+2c)\sqrt{C_2}\delta}{\eta_{1\ov{1}}}  \sum_{i=2}^n \Theta^{i\ov{i}} |\eta_{1\ov{1}i}|   \\
\ge {} & \frac{2(1+2c)\tilde{g}^{1\ov{1}}}{(n-1) \eta_{1\ov{1}}} \textrm{Re} \left\{ \sum_{i=2}^n  \eta_{1\ov{1} i}   \ov{T_{i1\ov{1}}}   \right\} \\
&  - \frac{\delta}{4} \sum_{i=2}^n \Theta^{i\ov{i}} \frac{|\eta_{1\ov{1}i}|^2}{(\eta_{1\ov{1}})^2}  - 16(1+2c)^2C_2\delta \tr{\tilde{g}}{g}.
\end{split}
\end{equation}
Putting together (\ref{distribute}), (\ref{niceclaim}), (\ref{almostlastclaim}), (\ref{Line13}), (\ref{Line14}) we obtain the lower bound
\begin{equation}\label{tre}
\begin{split}
\lefteqn{\frac{1+2c}{(n-1)^2 \eta_{1\ov{1}}} \sum_{i=2}^n \tilde{g}^{i\ov{i}} \tilde{g}^{1\ov{1}} | u_{i\ov{1}1} - \hat{h}_{i\ov{1} 1}|^2 } \\\ge {} & (1+2c- (1+2c)^2 \delta - \frac{\delta}{2}) \sum_{i=2}^n \frac{\Theta^{i\ov{i}} |\eta_{1\ov{1}i}|^2}{(\eta_{1\ov{1}})^2}
 \\ & +
\frac{2(1+2c)\tilde{g}^{1\ov{1}}}{(n-1) \eta_{1\ov{1}}} \textrm{Re} \left\{ \sum_{i=2}^n  \eta_{1\ov{1} i}   \ov{T_{i1\ov{1}}}   \right\}  
  - 16(1+2c)^2C_2\delta \tr{\tilde{g}}{g} -\frac{1}{2} \tr{\tilde{g}}{g}.
\end{split}
\end{equation}
Combining \eqref{secondcombine3}, \eqref{due} and \eqref{tre} we conclude that
\begin{equation}\label{quattro}
\begin{split}
0\geq {} &\frac{\delta}{2}\sum_{i=2}^n \Theta^{i\ov{i}} \frac{|\eta_{1\ov{1}i}|^2}{(\eta_{1\ov{1}})^2} -\frac{1}{2} \tr{\tilde{g}}{g} - 16(1+2c)^2C_2\delta \tr{\tilde{g}}{g} \\
& {}    + (C_1+1) \tr{\tilde{g}}{h}
 +  \frac{9}{10} \varphi' \sum_{i,p} \Theta^{i\ov{i}} ( |u_{pi}|^2 + |u_{p \ov{i}}|^2)- C_0 \tr{\tilde{g}}{g}-C\\
&+\frac{2(1+2c)\tilde{g}^{1\ov{1}}}{(n-1) \eta_{1\ov{1}}} \textrm{Re} \left\{ \sum_{i=2}^n  \eta_{1\ov{1} i}  \ov{T_{i1\ov{1}}}  \right\}
 +\frac{2(1+2c)}{\eta_{1\ov{1}}} \sum_{i=2}^n  \Theta^{i\ov{i}} \textrm{Re} \left( \ov{T^1_{1i}} \eta_{1\ov{1}i} \right).
\end{split}
\end{equation}
We now deal with the last two terms in this equation.
Recall that
$$\Theta^{i\ov{i}} = \frac{1}{n-1} \tilde{g}^{1\ov{1}} + O((\eta_{1\ov{1}})^{-1}), \quad \textrm{for } i=2, \ldots, n,$$
and in particular, $\Theta^{i\ov{i}}$ is large when $i\neq 1$.
Then the last term in \eqref{quattro} satisfies
\begin{equation} \label{line531}
\begin{split}
\lefteqn{ \frac{2(1+2c)}{\eta_{1\ov{1}}} \sum_{i=2}^n  \Theta^{i\ov{i}} \textrm{Re} \left( \ov{T^1_{1i}} \eta_{1\ov{1}i} \right)  } \\ \geq {} &
 \frac{2(1+2c)}{(n-1)\eta_{1\ov{1}}} \sum_{i=2}^n \tilde{g}^{1\ov{1}} \textrm{Re} \left( \eta_{1\ov{1}i} \ov{T_{1i\ov{1}}} \right) - \frac{C}{(\eta_{1\ov{1}})^2} \sum_{i=2}^n | \eta_{1\ov{1}i}| \\
\ge {} &- \frac{2(1+2c)}{(n-1)\eta_{1\ov{1}}} \sum_{i=2}^n \tilde{g}^{1\ov{1}} \textrm{Re} \left( \eta_{1\ov{1}i} \ov{T_{i1\ov{1}}} \right)
- \frac{\delta}{2} \sum_{i=2}^n \Theta^{i\ov{i}} \frac{| \eta_{1\ov{1}i}|^2}{(\eta_{1\ov{1}})^2} -\frac{1}{\delta (\eta_{1\ov{1}})^2},
\end{split}
\end{equation}
and the first term on the right hand side precisely cancels the other torsion term in \eqref{quattro}. We can also assume that $(\eta_{1\ov{1}})^2\geq\frac{1}{\delta}$.
Combining \eqref{quattro} and \eqref{line531}, we get
\begin{equation}
\begin{split}
0\geq {} & -\frac{1}{2} \tr{\tilde{g}}{g} - 16(1+2c)^2C_2\delta \tr{\tilde{g}}{g} + (C_1+1) \tr{\tilde{g}}{h}\\
& + \frac{9}{10} \varphi' \sum_{i,p} \Theta^{i\ov{i}} ( |u_{pi}|^2 + |u_{p \ov{i}}|^2)- C_0 \tr{\tilde{g}}{g}-C.
\end{split}
\end{equation}
But recall that $(C_1 +1) \tr{\tilde{g}}{h} \ge (C_0+2)\tr{\tilde{g}}{g}$ and $\varphi'>0$ so 
\begin{equation}
\begin{split}
0\geq {} & - 16(1+2c)^2C_2\delta \tr{\tilde{g}}{g} +  \frac{3}{2}\tr{\tilde{g}}{g}
-C.
\end{split}
\end{equation}
We chose $\delta$ so that $16(1+2c)^2C_2 \delta \le 1$, and so we conclude that $\tr{\tilde{g}}{g}$ is bounded from above at the maximum, and in particular,
$$\frac{1}{\lambda_1}\leq C.$$
From the assumption $\lambda_j\geq (1-\delta)\lambda_n$ for $j\geq 2$, together with the Monge-Amp\`ere equation, we conclude that
$$\lambda_n^{n-1}\leq C\lambda_2\dots\lambda_n\leq C\frac{\lambda_2\dots\lambda_n}{\lambda_1\dots\lambda_n}=\frac{C}{\lambda_1}\leq C.$$
Since $n\geq 3$, we conclude that $\lambda_n\leq C$ and so $\eta_{1\ov{1}}\leq C$ and we are done.

\section{Proofs of Theorem \ref{mainthm} and its Corollaries} \label{sectionproofs}

\begin{proof}[Proof of Theorem \ref{mainthm}]
Assume that we are in the setting of Theorem \ref{mainthm}.
We claim that we have an {\em a priori} gradient estimate
\begin{equation}\label{grad}
\sup_M |\nabla u|_g\leq C,
\end{equation}
where the constant $C$ depends only on
$\|F\|_{C^2(M,g)}$ and the fixed data $(M,\omega_0),$ $\omega$.
Indeed, thanks to the estimates from Theorems \ref{linfty} and \ref{theoremC2}, the proof of \eqref{grad} is identical to \cite[Theorem 5.1]{TW4}, since the fact that $\omega$ was assumed to be K\"ahler there played no role in the proof.   The basic ideas are as follows.  The estimate
$$\tr{g}{\tilde{g}} \le C (\sup_M | \nabla u|^2_g +1),$$
of
 Theorem \ref{theoremC2}  is compatible with a blow-up argument (cf. \cite{HMW}) so we can apply the Liouville-type theorem \cite[Theorem 5.2]{TW4} that a $(n-1)$-PSH function on $\mathbb{C}^n$ which is Lipschitz continous and maximal with bounded $L^{\infty}$ and Lipschitz norms must be constant.  The proof of the Liouville Theorem uses key  ideas of Dinew-Ko{\l}odziej \cite{DK}.

Given (\ref{grad}), we follow \cite[Theorem 6.1]{TW4} (see also \cite{FWW1,FWW2}) to derive higher order estimates
$$\| u \|_{C^k(M,g)} \le C_k,$$
for $k=0,1,2, \ldots$, and
 $$\tilde{g} \ge \frac{1}{C_0} g,$$
where each $C_k$ is a positive constant which depends only on $k$ and the fixed data $(M,\omega_0), \omega$ and $F$.

Indeed, combining the results of Theorems \ref{linfty} and \ref{theoremC2} with \eqref{grad}, we have the following estimates:
\begin{equation}\label{bds}
\sup_M |u| + \sup_M | \partial u|_g + \sup_M | \partial \ov{\partial} u |_g \le C,
\end{equation}
and from the equation (\ref{ma1}) and \eqref{boundb}, the uniform upper bound on $\tr{g}{\tilde{g}}$ gives
\begin{equation}\label{bds2}
C^{-1} g \le \tilde{g} \le C g.
\end{equation}
By the standard linear elliptic theory, it suffices to obtain a $C^{2+\alpha}(M,g)$ bound for $u$ for some $\alpha>0$.
This can be done with the usual Evans-Krylov method, adapted to the complex setting (see \cite{Si, Tr}), and the details are the same
as in \cite[Theorem 6.1]{TW4} (see also Section \ref{sectionreduce} below).

To finish the proof of Theorem \ref{mainthm}, it remains to  set up a continuity method and establish ``openness'', and prove the uniqueness of the solution. The proofs of these items are identical to the ones given in \cite[Section 6]{TW4}, where the assumption that $\omega$ was K\"ahler was never used.
\end{proof}

\begin{proof}[Proof of Corollary \ref{cor1}]
As we remarked in the introduction, if $\omega$ is Astheno-K\"ahler, $\omega_0$ is Gauduchon, and $u$ is a smooth function such that
$$\Psi_u=\omega_0^{n-1}+\ddbar u\wedge\omega^{n-2}>0,$$
then $\de\db\Psi_u=0$ too. A simple linear algebra argument (see \cite{Mi}) shows that there exists a unique Hermitian metric $\omega_u$ such that
$\omega_u^{n-1}=\Psi_u$. It follows that $\omega_u$ is Gauduchon. Similarly, if $\omega_0$ is strongly Gauduchon then so is $\omega_u$.
Then it is easy to see (cf. \cite{FWW2} or \cite[Section 2]{TW4}) that \eqref{mma1} is equivalent to \eqref{ma1} if we set $F'=F/(n-1), b'=b/(n-1)$. Theorem \ref{mainthm} therefore implies Corollary \ref{cor1}.
\end{proof}

\begin{proof}[Proof of Corollary \ref{cor2}]
By assumption, there is a smooth function $F$ such that
$$\Ric(\omega)=\psi+\ddbar F.$$
We use Corollary \ref{cor1} to solve the Calabi-Yau equation
$$\ti{\omega}^n=e^{F+b}\omega^n,$$
for some constant $b$, with $\ti{\omega}$ a Gauduchon metric.
Taking $-\ddbar\log$ of this equation gives
$$\Ric(\ti{\omega})=\Ric(\omega)-\ddbar F=\psi,$$
as required.
\end{proof}

\begin{proof}[Proof that Conjecture \ref{conjtw} implies Conjecture \ref{conjG}]  For $n=2$ this follows from the result of Cherrier \cite{Ch}. For $n>2$, this is a combination of the arguments of the previous two proofs. Fix a Gauduchon metric $\omega$ on $M$.
By assumption we have $$\textrm{Ric}(\omega) = \psi + \frac{1}{n-1}\ddbar F,$$ for a smooth function $F$.  With this choice of $F$, let $\omega_0=\omega$ and let $(u,b)$ solve (\ref{manew}) with $\Phi_u >0$.  Then for $\tilde{\omega}$ the $(n-1)$th root of $\Phi_u$, we have
$$\tilde{\omega}^n = e^{(F+b)/(n-1)} \omega^n,$$
and hence 
$\textrm{Ric}(\tilde{\omega}) =\psi.$
Since $\Phi_u$ is $\partial \ov{\partial}$-closed, $\tilde{\omega}$ is Gauduchon.
\end{proof}

\section{An $L^{\infty}$ estimate for (\ref{manew}).} \label{sectionnew}

In this section we prove Theorem \ref{Linftythm}.  The arguments use the same basic ideas as in Section \ref{sectionzero} except that we have to deal with the extra term $\mathrm{Re}\left(\mn\de u\wedge \db(\omega^{n-2})\right)$ in $\Phi_u$, and we make use of a crucial cancellation between certain torsion terms.  Recall that
$$\Phi_u = \omega_0^{n-1}+\ddbar u\wedge\omega^{n-2}+\mathrm{Re}\left(\mn\de u\wedge \db(\omega^{n-2})\right)>0,$$
with $u$  normalized by $\sup_M u=0$.
Set
$$E=\frac{1}{(n-1)!}\mathrm{Re}\left(\mn\de u\wedge \db(\omega^{n-2})\right).$$
Define $\ti{\omega}:=\frac{1}{(n-1)!}*\Phi_u$, for $*$ the Hodge star operator of $\omega$.  This is a Hermitian metric, and can be written
\begin{equation}\label{metrn}
\ti{\omega}=\omega_h+\frac{1}{n-1}\left((\Delta u)\omega-\ddbar u\right)+*E,
\end{equation}
where $\omega_h$ is the Hermitian metric defined by
$$\omega_h=\frac{1}{(n-1)!}*\omega_0^{n-1}.$$
Let us also define $H=\tr{\omega}{(*E)}$, so that
\begin{equation} \label{stare}
n *E\wedge \omega^{n-1}=n! E\wedge\omega=H\omega^n.
\end{equation}
Then taking the trace of \eqref{metrn} we see that
\begin{equation}\label{lapln}
\tr{\omega}{\ti{\omega}}=\tr{\omega}{\omega_h}+\Delta u +H,
\end{equation}
and therefore
\begin{equation}\label{ddbn}
\ddbar u =(n-1)\omega_h+ (\tr{\omega}{\ti{\omega}}-\tr{\omega}{\omega_h}-H)\omega-(n-1)\ti{\omega}+(n-1)*E.
\end{equation}
Since $\Phi_u$ satisfies $\det \Phi_u = e^{F+b} \det (\omega^{n-1})$ it follows that
 $\ti{\omega}$ satisfies the Monge-Amp\`ere equation
\begin{equation}\label{ma}
\ti{\omega}^n=e^{F+b}\omega^n.
\end{equation}

As in Section \ref{sectionzero}, we have $|b|\leq\sup_M|F|+C$,
for a uniform constant $C$ which depends only on $(M,\omega_0)$ and $\omega$.
To prove the $L^{\infty}$ estimate of $u$, we first prove the following lemma, which is the analogue of (\ref{need}).

\begin{lemma}  For a uniform $C$, we have
\begin{equation}\label{needn}
\begin{split}
\lefteqn{\ddbar u \wedge (2\omega_0^{n-1}+\ddbar u\wedge\omega^{n-2}+(n-1)!E)} \\ & \leq C(1+|\nabla u|^2)\omega^n-(n-1)!\ddbar u\wedge E.
\end{split}
\end{equation}
\end{lemma}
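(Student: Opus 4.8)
The plan is to reduce (\ref{needn}) to Lemma \ref{lemmatw} by separating out the first-order torsion contributions to $\ddbar u$. Set
\[
\alpha := (n-1)\omega_h + (\tr{\omega}{\ti{\omega}} - \tr{\omega}{\omega_h})\omega - (n-1)\ti{\omega},
\]
which is precisely the tensor appearing in Lemma \ref{lemmatw} with $\ti{F} = F+b$; since $|b| \le \sup_M|F| + C$, that lemma provides the uniform bound $\alpha\wedge(2\omega_0^{n-1} + \alpha\wedge\omega^{n-2}) \le C\omega^n$. Comparing with (\ref{ddbn}) we obtain $\ddbar u = \alpha + \gamma$, where $\gamma := (n-1)(*E) - H\omega$ is a $(1,1)$ form. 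Since $E = \frac{1}{(n-1)!}\mathrm{Re}(\mn\de u\wedge\db(\omega^{n-2}))$ is linear in $\nabla u$ with coefficients built from $\omega$ and its torsion, the quantities $E$, $*E$ and $H = \tr{\omega}{(*E)}$ are all $O(|\nabla u|)$ pointwise; in particular $|\gamma|_g \le C|\nabla u|$.

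The one algebraic fact I would isolate is the identity
\[
\gamma\wedge\omega^{n-2} = -(n-1)!\,E.
\]
This follows from the elementary Hodge-star formula $\theta\wedge\omega^{n-2} = (n-2)!\,*((\tr{\omega}{\theta})\omega - \theta)$ for a $(1,1)$ form $\theta$: applying it with $\theta = (n-1)(*E) - H\omega$, whose $\omega$-trace is $-H$, the right-hand side collapses to $-(n-1)!\,*(*E) = -(n-1)!\,E$. (Equivalently this says $\Phi_u = \omega_0^{n-1} + \alpha\wedge\omega^{n-2}$, consistent with $\ti{\omega} = \frac{1}{(n-1)!}*\Phi_u$ and (\ref{metrn}).) This is the ``crucial cancellation between torsion terms'': it is exactly what keeps the cross term inside $\ddbar u\wedge\ddbar u\wedge\omega^{n-2}$ --- a priori an uncontrollable term involving the Hessian of $u$ multiplied by $\nabla u$ and the torsion of $\omega$ --- under control.

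Granting these, the estimate is short. Since $2\omega_0^{n-1} + \ddbar u\wedge\omega^{n-2} + (n-1)!E = \omega_0^{n-1} + \Phi_u = 2\omega_0^{n-1} + \alpha\wedge\omega^{n-2}$, the left-hand side of (\ref{needn}) equals $(\alpha + \gamma)\wedge(2\omega_0^{n-1} + \alpha\wedge\omega^{n-2})$. Expanding, and using $\gamma\wedge\alpha\wedge\omega^{n-2} = \alpha\wedge(\gamma\wedge\omega^{n-2}) = -(n-1)!\,\alpha\wedge E$, one gets
\[
\ddbar u\wedge(2\omega_0^{n-1} + \ddbar u\wedge\omega^{n-2} + (n-1)!E) = \alpha\wedge(2\omega_0^{n-1} + \alpha\wedge\omega^{n-2}) + 2\,\gamma\wedge\omega_0^{n-1} - (n-1)!\,\alpha\wedge E.
\]
The first term is $\le C\omega^n$ by Lemma \ref{lemmatw}. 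The second satisfies $|\gamma\wedge\omega_0^{n-1}| \le C|\nabla u|\,\omega^n$, because $\omega_0^{n-1}$ is bounded with respect to $\omega$ and $|\gamma|_g \le C|\nabla u|$. For the third, write $\alpha = \ddbar u - \gamma$, so that $-(n-1)!\,\alpha\wedge E = -(n-1)!\,\ddbar u\wedge E + (n-1)!\,\gamma\wedge E$, and estimate $|(n-1)!\,\gamma\wedge E| \le C|\nabla u|^2\,\omega^n$ (both factors are $O(|\nabla u|)$). Adding the three bounds and absorbing $|\nabla u|$ into $1 + |\nabla u|^2$ gives (\ref{needn}).

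The only delicate step is spotting the identity $\gamma\wedge\omega^{n-2} = -(n-1)!\,E$; everything else is Lemma \ref{lemmatw} together with crude pointwise bounds on $E$, $*E$ and $H$. The gradient term on the right of (\ref{needn}) is harmless --- it is absorbed in the ensuing Moser iteration --- and the term $-(n-1)!\,\ddbar u\wedge E$ is deliberately left on the right: being of the form $\ddbar u\wedge\de u\wedge\db(\omega^{n-2})$, it can be integrated by parts against $e^{-pu}$ in the $L^\infty$ estimate exactly as the torsion terms are treated in Section \ref{sectionzero}.
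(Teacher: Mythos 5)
Your proof is correct, and it is a cleaner organization of essentially the same ingredients as the paper's argument. The paper also starts from the decomposition of $\ddbar u$ given by \eqref{ddbn}, invokes Lemma \ref{lemmatw}, and uses the star identity \eqref{star}; but it expands $\ddbar u\wedge(2\omega_0^{n-1}+\ddbar u\wedge\omega^{n-2}+(n-1)!E)$ term by term in a long display \eqref{calc1}, and the cancellation of the two dangerous terms $-\frac{2}{n}H\Delta u\,\omega^n$ and $2(n-1)!\Delta u\,E\wedge\omega$ is only observed at the end in \eqref{calc2}. You instead isolate the algebraic identity
\[
\gamma\wedge\omega^{n-2}=-(n-1)!\,E, \qquad \gamma:=(n-1)(*E)-H\omega,
\]
which is equivalent to the observation that $\Phi_u=\omega_0^{n-1}+\alpha\wedge\omega^{n-2}$ with $\alpha$ as in Lemma \ref{lemmatw}. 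This single identity absorbs all of the would-be second-order cross terms (precisely the ones the paper sees cancel) before any expansion begins, so that what remains is the Lemma \ref{lemmatw} term $\alpha\wedge(2\omega_0^{n-1}+\alpha\wedge\omega^{n-2})\le C\omega^n$, the harmless $O(|\nabla u|)$ term $2\gamma\wedge\omega_0^{n-1}$, and $-(n-1)!\alpha\wedge E=-(n-1)!\ddbar u\wedge E+(n-1)!\gamma\wedge E$, where the last piece is $O(|\nabla u|^2)$. All your steps check out: the trace $\tr{\omega}\gamma=-H$ is right, applying $*$ to \eqref{star} (valid since $*^2=\mathrm{id}$ in this middle degree) yields the identity, and the pointwise bounds $|\gamma|_g,|E|_g\le C|\nabla u|$ give the stated gradient terms. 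The net effect is the same estimate with the same term $-(n-1)!\ddbar u\wedge E$ left on the right, ready to be integrated by parts in the Moser iteration; your route is shorter and makes the ``crucial cancellation'' structural rather than computational.
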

\begin{proof}
Observe that
\begin{equation}\label{crude}
|H|\leq C|\nabla u|,\quad |E|\leq C|\nabla u|.
\end{equation}
Let us define
\[
\begin{split}
\lefteqn{S=((n-1)\omega_h+ (\tr{\omega}{\ti{\omega}}-\tr{\omega}{\omega_h})\omega-(n-1)\ti{\omega})} \\ & \wedge (2\omega_0^{n-1}+((n-1)\omega_h+ (\tr{\omega}{\ti{\omega}}-\tr{\omega}{\omega_h})\omega-(n-1)\ti{\omega})\wedge\omega^{n-2}).
\end{split}
\]
Then using  Lemma \ref{lemmatw}, we have
\begin{equation}\label{estn}
S\leq C\omega^n.
\end{equation}
We now start with a direct calculation, using \eqref{stare}, \eqref{ddbn}, \eqref{crude} and \eqref{estn},
\begin{equation}\label{calc1}
\begin{split}
&\ddbar u \wedge (2\omega_0^{n-1}+\ddbar u\wedge\omega^{n-2}+(n-1)!E)\\
&=S+(n-1)!\ddbar u\wedge E-(n-1)H\omega_h\wedge\omega^{n-1}+(n-1)^2*E\wedge\omega_h\wedge\omega^{n-2}\\
&-(\tr{\omega}{\ti{\omega}})H\omega^n+(n-1)(\tr{\omega}{\ti{\omega}})*E\wedge\omega^{n-1}
+(\tr{\omega}{\omega_h})H\omega^n-(n-1)(\tr{\omega}{\omega_h})*E\wedge\omega^{n-1}\\
&-2H\omega_0^{n-1}\wedge\omega-(n-1)H\omega_h\wedge\omega^{n-1}-H(\tr{\omega}{\ti{\omega}})\omega^n
+H(\tr{\omega}{\omega_h})\omega^n+H^2\omega^n\\
&+(n-1)H\ti{\omega}\wedge\omega^{n-1}-(n-1)H*E\wedge\omega^{n-1}+(n-1)H\ti{\omega}\wedge\omega^{n-1}\\
&-(n-1)^2*E\wedge\ti{\omega}\wedge\omega^{n-2}+2(n-1)*E\wedge\omega_0^{n-1}+(n-1)^2*E\wedge\omega_h\wedge\omega^{n-2}\\
&+(n-1)(\tr{\omega}{\ti{\omega}})*E\wedge\omega^{n-1}-(n-1)(\tr{\omega}{\omega_h})*E\wedge\omega^{n-1}-(n-1)H*E\wedge\omega^{n-1}\\
&-(n-1)^2 *E\wedge\ti{\omega}\wedge\omega^{n-2}+(n-1)^2*E\wedge*E\wedge\omega^{n-2}\\
&\leq (n-1)!\ddbar u\wedge E+C(1+|\nabla u|^2)\omega^n\\
&+\frac{2n-4}{n}(\tr{\omega}{\ti{\omega}})H\omega^n -2(n-1)^2*E\wedge\ti{\omega}\wedge\omega^{n-2}.
\end{split}
\end{equation}
A simple calculation shows that
\begin{equation}\label{star}
*(\ti{\omega}\wedge\omega^{n-2})=(n-2)!\big((\tr{\omega}{\ti{\omega}})\omega-\ti{\omega}\big).
\end{equation}
Indeed, we can compute at a point where $\omega=\sum_i e_i$ and $\ti{\omega}=\sum_j \lambda_j e_j$, where $e_i=\mn dz^i\wedge d\ov{z}^i$, and then
\[\begin{split}
\ti{\omega}\wedge\omega^{n-2}&=(n-2)!\left(\sum_j \lambda_j e_j\right)\wedge\sum_{k<\ell} e_1\dots \widehat{e}_k\dots \widehat{e}_\ell\dots e_n\\
&=(n-2)!\sum_k\left(\sum_{j\neq k} \lambda_j \right)e_1\dots \widehat{e}_k\dots e_n,
\end{split}\]
from which \eqref{star} follows. Therefore
\[\begin{split}
-2(n-1)^2*E\wedge\ti{\omega}\wedge\omega^{n-2}&= -2(n-1)^2E\wedge*(\ti{\omega}\wedge\omega^{n-2})\\
&=-2(n-1)(n-1)! (\tr{\omega}{\ti{\omega}}) E\wedge\omega+2(n-1)(n-1)!E\wedge\ti{\omega}\\
&=-\frac{2n-2}{n} (\tr{\omega}{\ti{\omega}})H\omega^n+2(n-1)(n-1)!E\wedge\ti{\omega},
\end{split}\]
and, using again \eqref{stare},
\begin{equation}\label{calc2}
\begin{split}
&\frac{2n-4}{n}(\tr{\omega}{\ti{\omega}})H\omega^n -2(n-1)^2*E\wedge\ti{\omega}\wedge\omega^{n-2}\\
={} &-\frac{2}{n}(\tr{\omega}{\ti{\omega}})H\omega^n+2(n-1)(n-1)!E\wedge\ti{\omega}\\
= {} & -\frac{2}{n}(\tr{\omega}{\omega_h})H\omega^n-\frac{2}{n}H\Delta u\omega^n -\frac{2}{n}H^2\omega^n+
2(n-1)(n-1)!E\wedge\omega_h \\ & + 2(n-1)!\Delta u E\wedge\omega
-2(n-1)!E\wedge\ddbar u+2(n-1)(n-1)!E\wedge *E\\
= {} &-\frac{2}{n}(\tr{\omega}{\omega_h})H\omega^n-\frac{2}{n}H^2\omega^n+
2(n-1)(n-1)!E\wedge\omega_h\\
&-2(n-1)!E\wedge\ddbar u+2(n-1)(n-1)!E\wedge *E\\
\leq {} & -2(n-1)!E\wedge\ddbar u+C(1+|\nabla u|^2)\omega^n.
\end{split}
\end{equation}
Note that the two bad terms $-\frac{2}{n}H\Delta u\omega^n$ and $2(n-1)!\Delta u E\wedge\omega$ exactly canceled out.
Combining \eqref{calc1} and \eqref{calc2} concludes the proof of \eqref{needn}.
\end{proof}

\begin{proof}[Proof of Theorem \ref{Linftythm}]
Following the method of  Section \ref{sectionzero}, we
 use  \eqref{needn} in a Moser iteration argument. For $p$ sufficiently large, as in (\ref{c1m}),
\begin{equation}
\begin{split}
\lefteqn{  \int_M e^{-pu} \ddbar u \wedge \left( 2\omega_0^{n-1}+\ddbar u\wedge\omega^{n-2}+(n-1)!E   \right) } \\
\geq {} & \frac{1}{Cp} \int_M \sqrt{-1} \partial e^{-\frac{pu}{2}} \wedge \ov{\partial} e^{-\frac{pu}{2}} \wedge  \omega^{n-1}-\frac{C}{p}\int_M e^{-pu}\omega^n,
\end{split}
\end{equation}
where we have used
$$\omega_0^{n-1}+\ddbar u\wedge\omega^{n-2}+(n-1)!E>0,$$
and
$$\de\db\left(\ddbar u\wedge\omega^{n-2}+(n-1)!E\right)=0.$$
Using claim \eqref{needn}, we see that
\begin{equation}
\begin{split}
\lefteqn{\int_M e^{-pu} \ddbar u \wedge \left( 2\omega_0^{n-1}+\ddbar u\wedge\omega^{n-2}+(n-1)!E   \right) }  \\
 \le {} & C \int_M e^{-pu} \omega^n + C \int_M  e^{-pu}| \nabla u|^2 \omega^n -\int_M e^{-pu}\ddbar u \wedge \mathrm{Re}\left(\mn\de u\wedge \db(\omega^{n-2})\right).
\end{split}
\end{equation}
We have that
$$C\int_M  e^{-pu}| \nabla u|^2 \omega^n\leq\frac{C}{p^2}\int_M \sqrt{-1} \partial e^{-\frac{pu}{2}} \wedge \ov{\partial} e^{-\frac{pu}{2}} \wedge  \omega^{n-1}, $$
and
\begin{equation}
\begin{split}
-&\int_M e^{-pu}\ddbar u \wedge \mathrm{Re}\left(\mn\de u\wedge \db(\omega^{n-2})\right)\\
&=   -\mathrm{Re}\int_M e^{-pu}\ddbar u \wedge \mn\de u\wedge \db(\omega^{n-2})  \\
& =\frac{1}{p}\mathrm{Re}\int_M \mn\de(e^{-pu})\wedge\ddbar u \wedge \db (\omega^{n-2})\\
&=\frac{1}{p}\mathrm{Re}\int_M \mn\de(e^{-pu})\wedge\db u \wedge \ddbar (\omega^{n-2})\\
&=-\frac{4}{p^2}\int_M \sqrt{-1} \partial e^{-\frac{pu}{2}} \wedge \ov{\partial} e^{-\frac{pu}{2}} \wedge  \ddbar (\omega^{n-2})\\
&\leq\frac{C}{p^2}\int_M \sqrt{-1} \partial e^{-\frac{pu}{2}} \wedge \ov{\partial} e^{-\frac{pu}{2}} \wedge  \omega^{n-1}.
\end{split}
\end{equation}
Putting all these together, we conclude that for $p$ sufficiently large we have
$$\int_M |\de e^{-\frac{pu}{2}}|^2\omega^n\leq Cp\int_M e^{-pu}\omega^n.$$
From here we can conclude the proof exactly as in Theorem \ref{linfty}. Again, we have two different ways to complete the proof.
We can modify the first proof there by replacing the operator $\Delta'$ with
the operator $Lu=\Delta u +H(u)$, where as before
$$H(u)=\frac{n\mathrm{Re}\left(\mn\de u\wedge \db(\omega^{n-2})\wedge\omega\right)}{\omega^n}=\frac{n(n-2)}{(n-1)}\cdot\frac{\mathrm{Re}\left(\mn\de u\wedge \db(\omega^{n-1})\right)}{\omega^n}.$$
The assumption that $\omega$ is Gauduchon implies that $\int_M Lu\omega^n=0$, and we have
$$Lu\geq -\tr{\omega}{\omega_h}\geq -C.$$ Therefore one can use the Green's formula for $L$ in exactly the same way as before.

We can also modify the second proof of Theorem \ref{linfty}, using Moser iteration. The function $v=u-\inf_M u$ satisfies $\Delta v \geq -C -H(v)$.
Then, as in \eqref{comp}, for any $p\geq 1$ we compute
\begin{equation}
\begin{split}
\int_M |\de v^{\frac{p+1}{2}}|^2_g\omega^n&=\frac{(p+1)^2}{4p}\int_M  v^p(-\Delta v)\omega^n+\frac{n(p+1)}{4p}\int_M \mn\db v^{p+1}\wedge\de(\omega^{n-1})\\
&=\frac{(p+1)^2}{4p}\int_M  v^p(-\Delta v)\omega^n\\
&\leq C\frac{(p+1)^2}{4p}\int_M v^p\omega^n +\frac{n(n-2)(p+1)^2}{4p(n-1)}\int_M v^p\mn\de v\wedge \db(\omega^{n-1})\\
&= C\frac{(p+1)^2}{4p}\int_M v^p\omega^n +\frac{n(n-2)(p+1)}{4p(n-1)}\int_M \mn\de v^{p+1}\wedge \db(\omega^{n-1})\\
&= C\frac{(p+1)^2}{4p}\int_M v^p\omega^n,
\end{split}
\end{equation}
using twice the Gauduchon condition $\de\db(\omega^{n-1})=0$. From here we conclude exactly as in Theorem \ref{linfty}.
\end{proof}

\section{Proof of Theorem \ref{thmreduce}} \label{sectionreduce}

Finally, we give the proof of Theorem \ref{thmreduce}. Suppose that $u$ solves \eqref{manew}. Thanks to Theorem \ref{Linftythm}, we have $\|u\|_{L^\infty}\leq C$ for a uniform constant $C$ (which depends only on $\omega,\omega_0$ and $F$). Since we are assuming that \eqref{hmwtype} holds, we can use a blow-up argument as in \cite{TW4} (which in turn uses the ideas of Dinew-Ko{\l}odziej \cite{DK}) to show that
\begin{equation}\label{grad2}
\sup_M |\nabla u|_g\leq C,
\end{equation}
for a uniform constant $C$.

Indeed the blow-up argument of \cite[Theorem 5.1]{TW4} can be applied with minor modifications to give (\ref{grad2}). The only difference here is the presence  of the term $*E(u)$.  But this term is linear in $\partial u$ and hence converges to zero uniformly on compact subsets under the the rescaling procedure of \cite{TW4}.  The rest of the argument is identical to \cite{TW4}.

Combining $\|u\|_{L^\infty}\leq C$  with \eqref{grad2} and \eqref{hmwtype} we have
\begin{equation}\label{bds3}
\sup_M |u| + \sup_M | \partial u|_g + \sup_M | \partial \ov{\partial} u |_g \le C,
\end{equation}
and (\ref{manew}) together with the bound on $|b|$ and the uniform upper bound on $\tr{g}{\tilde{g}}$ give
\begin{equation}\label{bds4}
C^{-1} g \le \tilde{g} \le C g.
\end{equation}
By the standard linear elliptic theory, it suffices to obtain a $C^{2+\alpha}(M,g)$ bound for $u$ for some $\alpha>0$.
To do this, we again follow the strategy of \cite{TW4} using the Evans-Krylov theory (see also \cite{FWW2}). However, there are new difficulties that arise.
Let us define a tensor
$$Z=*E=\mn Z_{i\ov{j}} dz^i\wedge d\ov{z}^j,$$
so that
$$\tilde{g}_{i\ov{j}} = g_{i\ov{j}} + \frac{1}{n-1} \left( (\Delta u)g_{i\ov{j}} -u_{i\ov{j}} \right) +  Z_{i\ov{j}}.$$
We have $|Z|_g\leq C|\nabla u|_g$, and $|\nabla Z|_g\leq C(|\nabla\nabla u|_g+|\nabla\ov{\nabla}u|_g)$. We will also use the notation
$\hat{Z}=(n-1)Z$.

As in \cite{TW4}, we will work in a small open subset of $\mathbb{C}^n$, containing a ball $B_{2R}$ of radius $2R$.  
The equation is given by
$$\log \frac{\det \tilde{g}}{\det g} = \tilde{F},$$
where $\ti{F}=F+b$.
Let $\gamma=(\gamma^i)$ be a unit vector in $\mathbb{C}^n$.  The same calculation as in \eqref{atx}, using \eqref{comm}, gives
\begin{equation} \label{ek1}
\begin{split}
\Theta^{i\ov{j}} u_{\gamma \ov{\gamma}i\ov{j}} &\ge  G-C \sum_{p,q} | u_{\gamma p\ov{q}}|+S - C|u_{ij}|_g,
\end{split}\end{equation}
with
$$S=\frac{1}{(n-1)^2} \tilde{g}^{i\ov{q}} \tilde{g}^{p\ov{j}} ( g_{p\ov{q}} g^{r\ov{s}} u_{r\ov{s} \ov{\gamma}} - u_{p\ov{q} \ov{\gamma}} + \hat{B}_{p\ov{q}\ov{\gamma}} ) (g_{i\ov{j}} g^{a\ov{b}} u_{a\ov{b} \gamma} - u_{i\ov{j}\gamma} + \hat{B}_{i\ov{j} \gamma} ),$$
and
$$B_{i\ov{j} \gamma} = \hat{h}_{i\ov{j} \gamma} + \hat{Z}_{i\ov{j} \gamma},$$
and for $G$ a uniformly bounded function (which depends on $\gamma$), which may change from line to line.  Here we are writing  $|u_{ij}|^2_g$ for $| \nabla \nabla u|^2_g$.
We convert these covariant derivatives into partial derivatives and obtain
$$\Theta^{i\ov{j}} \partial_i \partial_{\ov{j}} u_{\gamma \ov{\gamma}} \ge G - C \sum_{p,q} | u_{p\ov{q}\gamma}| + S- C|u_{ij}|_g.$$
For a uniform $C'>0$,
\[\begin{split}
S&\geq C'^{-1} g^{i\ov{q}} g^{p\ov{j}} ( g_{p\ov{q}} g^{r\ov{s}} u_{r\ov{s} \ov{\gamma}} - u_{p\ov{q} \ov{\gamma}} + \hat{B}_{p\ov{q}\ov{\gamma}} ) (g_{i\ov{j}} g^{a\ov{b}} u_{a\ov{b} \gamma} - u_{i\ov{j}\gamma} + \hat{B}_{i\ov{j} \gamma})\\
&\geq C'^{-1}g^{i\ov{q}} g^{p\ov{j}}( g_{p\ov{q}} g^{r\ov{s}} u_{r\ov{s} \ov{\gamma}} - u_{p\ov{q} \ov{\gamma}})(g_{i\ov{j}} g^{a\ov{b}} u_{a\ov{b} \gamma} - u_{i\ov{j}\gamma})-C'| B_{i\ov{j} \gamma}|^2_g\\
&=C'^{-1}\left((n-2)|g^{a\ov{b}} u_{a\ov{b} \gamma}|^2+ |u_{i\ov{j} \gamma}|_g^2\right)-C' | B_{i\ov{j} \gamma}|^2_g\\
&\geq C \sum_{p,q} | u_{p\ov{q}\gamma}|-C' | B_{i\ov{j} \gamma}|^2_g-C',
\end{split}\]
and
$$|B_{i\ov{j} \gamma}|^2_g \le C(1 + |  u_{ij}|^2_g).$$
Hence, we conclude that
\begin{equation} \label{ek3}
\begin{split}
\Theta^{i\ov{j}} \de_i\de_{\ov{j}}u_{\gamma \ov{\gamma}} &\ge G-C_0(1+  |u_{ij}|_g^2),
\end{split}\end{equation}
where $G$ and $C_0$ depend on $\gamma$, and $G$ is bounded.  To deal with the bad term $|u_{ij}|^2_g$, we claim that we have the estimate
\begin{equation}\label{goa}
\Theta^{i\ov{j}} \partial_i \partial_{\ov{j}} | \nabla u|^2_g \ge C_1^{-1}  | u_{ij}|^2_g - C,
\end{equation}
for a uniform $C_1>0$. To see this, we compute
\begin{equation}\label{goa2}
\begin{split}
\Theta^{i\ov{j}} \partial_i \partial_{\ov{j}} | \nabla u|^2_g&=  \Theta^{i\ov{j}}g^{p\ov{q}} \left( u_{p \ov{j}} u_{i\ov{q}}  +  u_{pi}u_{\ov{q}\ov{j}} \right) +  \Theta^{i\ov{j}} g^{p\ov{q}} (u_{pi\ov{j}} u_{\ov{q}} + u_{\ov{q} i \ov{j}} u_p)\\
&\geq C'^{-1}|u_{ij}|^2_g+\Theta^{i\ov{j}} g^{p\ov{q}} (u_{pi\ov{j}} u_{\ov{q}} + u_{\ov{q} i \ov{j}} u_p).
\end{split}
\end{equation}
The analog of \eqref{Fell} for equation \eqref{manew} is
$$ \Theta^{i\ov{j}} u_{i\ov{j} p} = F_{p}-  \tilde{g}^{i\ov{j}} (h_{i\ov{j}p}+Z_{i\ov{j}p}).$$
Switching covariant derivatives and using the bound $|\nabla Z|_g\leq C(1+|u_{ij}|_g)$, we obtain
\begin{equation}\label{goa3}
\left|\Theta^{i\ov{j}} g^{p\ov{q}} (u_{pi\ov{j}} u_{\ov{q}} + u_{\ov{q} i \ov{j}} u_p)\right|\leq (2C')^{-1}  |u_{ij}|^2_g+C.
\end{equation}
Combining \eqref{goa2} and \eqref{goa3} proves \eqref{goa}.
Then for $A$ sufficiently large (depending on $\gamma$) we have
$$\Theta^{i\ov{j}} \partial_i \partial_{\ov{j}} (u_{\gamma \ov{\gamma}} + A | \nabla u|^2_g) \ge G.$$

For the next step, we proceed as in \cite{TW4} and consider the metric $\hat{g}_{i\ov{j}}$ on $B_{2R}$ given by
  $\hat{g}_{i\ov{j}} = g_{i\ov{j}}(0).$
Define
 $$\hat{\Theta}^{i\ov{j}} = \frac{1}{n-1} \left( (\tr{\tilde{g}}{\hat{g}}) \hat{g}^{i\ov{j}} - \tilde{g}^{i\ov{j}} \right).$$
By concavity of $\log \det $ and arguing as in \cite{TW4},
we get
\begin{equation} \label{ek4}
\sum_{i,j} \hat{\Theta}^{i\ov{j}} (y) \left( u_{i\ov{j}}(y) - u_{i\ov{j}}(x) \right) \le C'R.
\end{equation}
As in \cite{TW1} for example, we find a set of unit vectors $\gamma_1, \ldots, \gamma_N$ of $\mathbb{C}^n$, containing an orthonormal basis, with the property that
$$\hat{\Theta}^{i\ov{j}}(y) = \sum_{\nu=1}^N \beta_{\nu}(y) (\gamma_{\nu})^i \ov{(\gamma_{\nu})^j},$$
for $\beta_{\nu}$ with $0 < C^{-1}\leq \beta_{\nu} \le C$.  Now define
$$w_{\nu} = u_{\gamma_{\nu} \ov{\gamma_{\nu}}} + A | \nabla u|^2_g.$$
Since we have a uniform bound on $u$ and $\Delta u$ it follows that $| \nabla u|^2_g$ is bounded in $C^{\alpha}$ for any $\alpha$ with $0<\alpha<1$, which we fix once and for all.  Then from (\ref{ek4}), we have
\begin{equation} \label{ek5}
\sum_{\nu=1}^N \beta_{\nu} (w_{\nu} (y) - w_{\nu}(x)) \le C R^{\alpha}.
\end{equation}
On the other hand, from (\ref{ek3}), we have
\begin{equation} \label{ek6}
\Theta^{i\ov{j}} \partial_i \partial_{\ov{j}} w_{\nu} \ge -C.
\end{equation}
From (\ref{ek5}) and (\ref{ek6}) and the arguments of \cite{TW1}, we obtain
$$\Omega(R) \le \delta \Omega(2R) + C R^{\alpha}, \quad \textrm{for } \Omega = \sum_{\nu=1}^N \textrm{osc}_{B_R} w_{\nu},$$
for a uniform $0<\delta<1$ and for $R$ sufficiently small.
Applying  \cite[Lemma 8.23]{GT} we obtain
$$\Omega(R) \le C R^{\kappa},$$
for a small but uniform $\kappa>0$ (with $\kappa<\alpha$) and hence we obtain a H\"older estimate for
the second derivatives $u_{i\ov{j}}$ of $u$ (since we already have a $C^{\alpha}$ estimate for $| \nabla u|^2_g$).

Combining all these estimates, we have proved (assuming \eqref{hmwtype} holds) that there are constants
$C_k$, $k=0,1,\dots$, which depend only on $k, \omega,\omega_0$ and $F$, such that if $u$ solves \eqref{manew}, then
$$\| u \|_{C^k(M,g)} \le C_k,$$
and $$\tilde{g} \ge \frac{1}{C_0} g.$$

To complete the proof of Theorem \ref{thmreduce}, we need to set up a continuity method and establish ``openness'', and prove the uniqueness of the solution. 
These follow from  modifications of the arguments of \cite{FWW2} or \cite{TW4}, which we now briefly explain (uniqueness is also proved in \cite{Po3}, which furthermore contains a calculation of the linearized equation).

We consider the  family of equations $u_t$, $b_t$, for $t \in [0,1]$,
\begin{equation} \label{family}
\left(\omega_h+ \frac{1}{n-1} \left( (\Delta u_t) \omega - \ddbar u_t \right)+*E(u_t)\right)^n = e^{tF+b_t} \omega_h^n,
\end{equation}
with
\begin{equation} \label{MAC2t}
\omega_h+ \frac{1}{n-1} \left( (\Delta u_t) \omega - \ddbar u_t\right)+*E(u_t) >0, \quad \sup_M u_t=0.
\end{equation}
Suppose that we have a solution of (\ref{family}), (\ref{MAC2t}) for $t=\hat{t}$ and write
$$\hat{\omega} = \omega_h + \frac{1}{n-1}( (\Delta u_{\hat{t}}) \omega - \ddbar u_{\hat{t}})+*E(u_{\hat{t}}).$$
Define
\begin{equation*}
\hat{\Theta}^{i\ov{j}} = \frac{1}{n-1} ((\tr{\hat{g}}{g}) g^{i\ov{j}} - \hat{g}^{i\ov{j}}).
\end{equation*}
Consider the linear differential operator
$$L(v)=\hat{\Theta}^{i\ov{j}} v_{i\ov{j}} + \hat{g}^{i\ov{j}} Z(v)_{i\ov{j}}.$$
It is elliptic and its kernel are the constants. Denote by $L^*$ the adjoint of $L$ with respect to the $L^2$ inner product with volume form $\hat{\omega}^n$. We then argue as in \cite{Ga0}.  The index of $L$ is zero and hence the kernel of $L^*$ is one-dimensional, spanned by a smooth function $f$. The maximum principle implies that every
nonzero function in the image of $L$ must change sign, and since $f$ is orthogonal to the image of $L$, it must have constant sign. We can assume that $f\geq 0$. The strong maximum principle then implies that $f>0$, and so we can write $f=e^\sigma$ for a smooth function $\sigma$.  We may and do assume that $\int_M e^{\sigma} \hat{\omega}^n=1$.

Consider the operator
\[
\begin{split}
\Upsilon(v) ={} &  \log \frac{ (\hat{\omega} + \frac{1}{n-1} ((\Delta v)\omega - \ddbar v)+*E(v))^n}{\hat{\omega}^n}   \\ & - \log \left( \int_M e^{\sigma} (\hat{\omega} + \frac{1}{n-1}((\Delta v)\omega - \ddbar v)+*E(v))^n \right),
\end{split}
\]
which maps $C^{2+\alpha}$ functions with integral zero to the space of $C^{\alpha}$ functions $w$ satisfying $\int_M e^w e^{\sigma} \hat{\omega}^n=1$.   Note that the tangent space at $0$ of the latter space consists of $C^{\alpha}$ functions orthogonal to the kernel of $L^*$.
 Since for any $C^{2+\alpha}$ function $\zeta$ we have
$$ \int_M e^{\sigma} (\hat{\Theta}^{i\ov{j}} \zeta_{i\ov{j}}+\hat{g}^{i\ov{j}} Z(\zeta)_{i\ov{j}}) \hat{\omega}^n =\int_M
e^\sigma L(\zeta)\hat{\omega}^n= \int_M \zeta L^*(e^\sigma)\hat{\omega}^n=0,$$
it follows that the  linearization of $\Upsilon$ at $0$ is the operator $L$.  From the Fredholm alternative, $L$ gives an isomorphism of the tangent spaces.  By the Inverse Function Theorem,  we obtain a solution of (\ref{family}) for $t$ close to $\hat{t}$, as required.

Lastly, the uniqueness of the solution  $(u,b)$ of \eqref{manew} follows almost exactly as in \cite{FWW2} or \cite{TW4}.  The only difference is the additional term $*E$ in the equation.  But one only needs to observe that the map $w \mapsto *E(w)$ is linear in $w$ and vanishes at a maximum or minimum of $w$.  The rest of the argument is the same and we refer the reader to Section 6 of \cite{TW4}.

This completes the proof of Theorem \ref{thmreduce}.

\end{document}